\numberwithin{equation}{section}
\newtheorem{Theorem}{Theorem}[section]
\newtheorem{Lemma}[Theorem]{Lemma}
\newtheorem{Proposition}[Theorem]{Proposition}
\def\half{{\textstyle\frac12}}
\def\tf#1#2{{\textstyle\frac{#1}{#2}}}
\def\mapto#1{\mathop{\longleftarrow}\limits^{#1}}
\begin{document}

\allowdisplaybreaks

\renewcommand{\PaperNumber}{095}

\FirstPageHeading

\ShortArticleName{The Chazy XII Equation and Schwarz Triangle Functions}

\ArticleName{The Chazy XII Equation\\ and Schwarz Triangle Functions}

\Author{Oksana BIHUN and Sarbarish CHAKRAVARTY}
\AuthorNameForHeading{O.~Bihun and S.~Chakravarty}
\Address{Department of Mathematics, University of Colorado, Colorado Springs, CO 80918, USA}
\Email{\href{mailto:obihun@uccs.edu}{obihun@uccs.edu}, \href{mailto:schakrav@uccs.edu}{schakrav@uccs.edu}}

\ArticleDates{Received June 21, 2017, in f\/inal form December 12, 2017; Published online December 25, 2017}

\Abstract{Dubrovin~[\textit{Lecture Notes in Math.}, Vol.~1620, Springer, Berlin, 1996, 120--348]
showed that the Chazy XII equation $y'''- 2yy''+3y'^2 = K(6y'-y^2)^2$, $K \in \mathbb{C}$,
is equivalent to a projective-invariant equation for an af\/f\/ine connection
on a one-dimensional complex manifold with projective structure.
By exploiting this geometric connection it is shown that the Chazy~XII
solution, for certain values of~$K$,
can be expressed as $y=a_1w_1+a_2w_2+a_3w_3$
where $w_i$ solve the generalized Darboux--Halphen system. 
This relationship holds only for certain values
of the coef\/f\/icients $(a_1,a_2,a_3)$ and the Darboux--Halphen parameters
$(\alpha, \beta, \gamma)$, which are enumerated in Table~\ref{t:gdh}.
Consequently, the Chazy XII solution $y(z)$ is parametrized by a particular
class of Schwarz triangle functions $S(\alpha, \beta, \gamma; z)$
which are used to represent the solutions~$w_i$ of the Darboux--Halphen
system. The paper only considers the case where $\alpha+\beta+\gamma<1$.
The associated triangle functions are related among themselves
via rational maps that are derived from
the classical algebraic transformations of hypergeometric functions.
The Chazy XII equation is also shown to be equivalent to a Ramanujan-type
dif\/ferential system for a triple $(\hat{P}, \hat{Q},\hat{R})$.}

\Keywords{Chazy; Darboux--Halphen; Schwarz triangle functions; hypergeometric}

\Classification{34M45; 34M55; 33C05}

\section{Introduction}\label{section1}
In 1911, J.~Chazy~\cite{Chazy3} considered the classif\/ication problem of all third order dif\/ferential equations of the form $y''' = F(z, y, y', y'')$ possessing the Painlev\'e property, where the prime $'$ denotes ${\rm d}/{\rm d}z$, and $F$ is a polynomial in $y$, $y'$, $y''$ and locally analytic in~$z$. A~dif\/ferential equation in the complex plane is said to have the Painlev\'e property if its general solution has no movable branch points. In his work, Chazy introduced thirteen classes of equations referred to as Chazy classes I--XIII. Among these, classes III and XII are particularly
interesting as their general solutions possess a movable barrier, i.e., a closed curve in the complex plane across which the solutions can not be analytically continued. That is, the solutions are analytic (or meromorphic) on either side of the barrier depending on the prescribed initial conditions. Both the Chazy III and XII equations can be expressed together as
\begin{gather}
y'''- 2yy''+3y'^2 = K\big(6y'-y^2\big)^2, \qquad K=0 \qquad \mbox{or} \qquad K=\frac{4}{36-k^2} .\label{gch}
\end{gather}
The standard form for the Chazy XII equation is given by~\eqref{gch} with the parameter $K=4/(36-k^2)$, whereas the Chazy~III equation
corresponds to $K=0$, and is the limiting case of Chazy~XII
as $k \to \infty$. Chazy~\cite{Chazy3} observed the remarkable fact that~\eqref{gch}
is linearizable via the hypergeometric equation
\begin{gather*}s(s-1)\chi''+ \big(\tf{7s}{6} -\half\big)\chi'+\big(\tf{1}{4k^2}-\tf{1}{144}\big)\chi=0,\end{gather*}
for $k \in {\mathbb C} \cup \infty$, $k\neq 0$. He also showed that \eqref{gch}
possesses the Painlev\'e property when $K=0$ or when the value
of the parameter $k$ is a positive integer such that $k > 1$
and $k \neq 6$ (see also~\cite{Cosgrove}). The $k=0$ case is linearizable
by Airy's equation $\chi''=cs\chi$, $c$ constant~\cite{CO1996}, and the limiting cases
of $k=\pm 6$ can be solved via elliptic functions~\cite{Chazy3, Cosgrove}.
For $K=0$ or for integer values $k>6$, the general solution of~\eqref{gch} possesses a movable natural barrier in the complex plane.

The Chazy III equation arises in
mathematical physics in studies concerning magnetic mono\-po\-les~\cite{Atiyah-H},
self-dual Yang--Mills and Einstein equations \cite{CAC90, Hitchin}, topological
f\/ield theory~\cite{Dubrovin}, as well as special reductions of hydrodynamic type
equations \cite{Ferapontov} and incompressible f\/luids~\cite{Rosenhead}.
The Chazy XII equation is related to the generalized
Darboux--Halphen system~\cite{Ablowitz-H1,Ablowitz-H2}, which arises
in reductions of self-dual Yang--Mills equations associated with
the gauge group of dif\/feomorphism $\operatorname{Dif\/f}(S^3)$ of a 3-sphere~\cite{CAT92}
as well as ${\rm SU}(2)$-invariant hypercomplex manifolds~\cite{Hitchin1}
with self-dual Weyl curvature~\cite{CA96}.
More recently, the Chazy~XII equation with specif\/ic values of the parameter~$k$
has been linked to the study of vanishing Cartan curvature invariant for
certain types of rank~2 distributions on a 5-manifold~\cite{Randall16, Randall17}.
Consequently, there has been renewed interest in the study of~\eqref{gch} and
the singularity structure of its solutions in the complex plane. Interested
readers are referred to the comprehensive works by Bureau~\cite{Bureau1},
Clarkson and Olver~\cite{CO1996} and Cosgrove~\cite{Cosgrove}.

A signif\/icant aspect of \eqref{gch} is the fact that its general solutions
can be expressed in terms of Schwarz triangle functions which
def\/ine conformal mappings from the upper
half complex plane onto a region bounded by three circular arcs
(see, e.g.,~\cite{Nehari}). For $K=0$,
solutions of the Chazy III equation are related to the automorphic forms
associated with the modular group~${\rm SL}_2(\mathbb{Z})$ and its subgroups~\cite{CA09}.
This fact can be traced back to the work of S.~Ramanujan. In 1916,
Ramanujan~\cite{Ramanujan-arith}, \cite[pp. 136--162]{Ramanujan-collect}
introduced certain functions $P(q)$, $Q(q)$, $R(q)$, $q := e^{2 \pi iz}$, $\operatorname{Im}(z)>0$, which correspond to the
(f\/irst three) Eisenstein series associated with the modular group ${\rm SL}_2(\mathbb{Z})$.
He showed that these functions satisfy the dif\/ferential relations
\begin{gather}
\frac{1}{2 \pi i} P'(z) =\frac{P^2-Q}{12}, \qquad
\frac{1}{2 \pi i}Q'(z)=\frac{PQ-R}{3}, \qquad
\frac{1}{2 \pi i} R'(z) =\frac{PR-Q^2}{2}.
\label{dPQR}
\end{gather}
System \eqref{dPQR} can be reduced to a single third order dif\/ferential
equation for $P(q)$. In fact, the function $y(z):= \pi i P(q)$ satisf\/ies
the Chazy III equation.

For $K \neq 0$, the solutions of the Chazy XII equation are also related
to Schwarz functions automorphic on curvilinear triangles that tessellate
the interior of the natural barrier
for integer values of the parameter $k > 6$. For $2 \leq k \leq 5$
the solutions of \eqref{gch} are expressed via the
polyhedral functions which are rational functions
associated with symmetry groups of solids (see, e.g.,~\cite{Ford}).

In this paper, we primarily consider the Chazy XII equation \eqref{gch}
for integer values $k > 6$. The main objective of this paper
is to f\/ind all solutions $y(z)$ that are given in terms of Schwarz
triangle functions. The central result to achieve this goal is
the following:

{\it If $y(z) = 6Y(z)$ solves \eqref{gch} with $K \neq 0$, then
$Y(z)$ is a convex linear combination
\begin{gather*} Y(z) = \beta_1w_1 + \alpha_1w_2 + \gamma_1w_3, \qquad
\alpha_1, \beta_1, \gamma_1 > 0, \qquad \alpha_1+\beta_1+\gamma_1=1 \end{gather*}
of the variables $w_1$, $w_2$, $w_3$ satisfying the Darboux--Halphen system~\eqref{gdh} with
parameters $(\alpha{,} \beta{,} \gamma)$. The relationship holds only
for special choices of $(\alpha_1, \beta_1)$ and $(\alpha, \beta, \gamma)$
listed in Table~{\rm \ref{t:gdh}} of Section~{\rm \ref{section3.3}}. The variables $w_1$, $w_2$, $w_3$
are expressed in terms of the Schwarz triangle function $s(z) = S(\alpha, \beta, \gamma; z)$ and its derivatives as shown in~\eqref{wdef}.}

The above result is obtained by extending the geometric formulation
of~\eqref{gch} by B.~Dub\-ro\-vin~\cite{Dubrovin} to recast~\eqref{gch} as
a purely algebraic relation. The Chazy XII solution $y(z)$ is expressed
in a parametric form involving hypergeometric functions $_2F_1$
\begin{gather*} y = y(s), \qquad z=z(s) , \end{gather*}
where $s(z) = S(\alpha, \beta, \gamma; z)$ is the Schwarz triangle function
(see Section~~\ref{section2.2} for details).
Thus, Table~\ref{t:gdh} provides explicit parametrizations
of the Chazy XII solutions in terms of a distinct family of Schwarz
triangle functions. This family includes previously known functions, e.g., $S(\half,\tf13,\tf1k;z)$, $6 < k \in \mathbb{N}$ as well as
some new cases. We believe that
our approach is new, and that it can be applied to study similar
nonlinear equations with natural barrier~\cite{SC13, Maier11}.

The paper is organized as follows. Section~\ref{section2.1} develops necessary
geometric background on af\/f\/ine connections on a one-dimensional
complex manifold with a projective structure fol\-lowing~\cite{Dubrovin}.
In Section~\ref{section2.2}, the generalized Darboux--Halphen system is introduced and its
solutions in terms of Schwarz triangle functions are discussed. The
connection between the Chazy~XII equation and the
generalized Darboux--Halphen
system described in Section~\ref{section3.1} leads to a purely algebraic formulation
of~\eqref{gch}. The solvability conditions of this algebraic system
are analyzed in Section~\ref{section3.2}, and these lead to a classif\/ication of the
Chazy XII solutions according to their pole structures inside
the natural barrier.
In Section~\ref{section3.3}, a Ramanujan-type triple of functions is introduced.
These functions satisfy a system of f\/irst order equations
that is similar to~\eqref{dPQR} and is equivalent to the Chazy~XII equation.
In Section~\ref{section4.1}, the Schwarz triangle function is presented as
the inverse to the conformal map def\/ined by the ratio of two linearly
independent solutions of the hypergeometric equation. The parametrizations
of the Ramanujan-like triple as well as the Chazy XII solution in terms
of hypergeometric functions are also discussed here. Section~\ref{section4.2} outlines
the rational transformations between the distinct Schwarz
functions parameterizing the Chazy XII solution. These rational maps are
derived from the well-known algebraic transformations among the
hypergeometric functions. In order to make the paper self-contained,
we have included two appendices. Appendix~\ref{appendixA} contains a proof of Lemma~\ref{lemma1} introduced
in Section~\ref{section3.2}, while Appendix~\ref{appendixB} contains an elementary derivation of the
radius of the natural barrier for the Chazy XII solution.

\section{Background}\label{section2}
\subsection{Af\/f\/ine connection and projective structure}\label{section2.1}
We begin this section by reviewing the relation between the solution $y(z)$
in~\eqref{gch} with af\/f\/ine connections on a one-dimensional
complex manifold.
We consider dif\/ferential forms of order $m \in \mathbb{N}$ denoted by
$f=f(z){\rm d}z^m$ on a one-dimensional
complex manifold with local coordinate~$z$, where $f(z)$ is a holomorphic
(or meromorphic) function. Under the local change of coordinates $z \to \tilde{z}(z)$,
$f$ transforms according to $f(z)dz^m = \tilde{f}(\tilde{z}){\rm d}\tilde{z}^m$. The covariant
derivative of a $m$-dif\/ferential is a $(m+1)$-dif\/ferential $\nabla f$
def\/ined by
\begin{gather*}
 \nabla f = \nabla f(z) {\rm d}z^{m+1}, \qquad \nabla f(z):= f'(z) - m\eta(z)f(z) ,
\end{gather*}
where $\eta = \eta(z){\rm d}z$ is a holomorphic (or meromorphic) af\/f\/ine connection on the manifold. It follows from the transformation property of $\nabla f$ that~$\eta$ must transform as
\begin{gather}
\tilde{\eta}(\tilde{z}){\rm d}\tilde{z} = \eta(z) {\rm d}z -
\big(\tilde{z}''(z)/\tilde{z}'(z)\big){\rm d}z . \label{eta}
\end{gather}
The ``curvature'' associated with $\eta$ is def\/ined by the quadratic dif\/ferential $\Omega=~\Omega(z)dz^2$ as
\begin{gather*} \Omega(z):= \eta'(z) - \half \eta(z)^2 . \end{gather*}
In fact, $\Omega$ is a {\it projective} connection transforming under local
change of coordinates as
\begin{gather}
\tilde{\Omega}(\tilde{z}){\rm d}\tilde{z}^2 =
\Omega(z){\rm d}z^2 - \{\tilde{z};z\}{\rm d}z^2 , \qquad
\{\tilde{z}, z\} := \frac{\tilde{z}'''(z)}{\tilde{z}'(z)}-\frac{3}{2}
\left(\frac{\tilde{z}''(z)}{\tilde{z}'(z)}\right)^2 ,\label{omega}
\end{gather}
where $\{\tilde{z}, z\}$ is called the Schwarzian derivative.
Under the projective (M\"obius) transformations
\begin{gather*} z \to \tilde{z} = \frac{az+b}{cz+d} , \qquad ad-bc \neq 0 , \end{gather*}
$\Omega$ transforms covariantly, i.e.,
$\Omega(z){\rm d}z^2 = \tilde{\Omega}(\tilde{z}){\rm d}\tilde{z}^2$
because $\{\tilde{z}, z\}=0$. Consequently, any $m$-dif\/ferential of the form
$Q = Q(z){\rm d}z^m$, where $Q(\Omega(z), \nabla\Omega(z), \nabla^2\Omega(z), \ldots)$
is a homogeneous polynomial of degree $m$ with $\deg \nabla^r\Omega = r+2$,
transforms covariantly and an equation of the form $Q=0$ remains invariant under a projective transformation.
It was shown in~\cite{Dubrovin} that equation~\eqref{gch}
corresponds to the projective-invariant equation $Q=0$ for $m=2$, i.e.,
\begin{gather}
\nabla^2\Omega + J\Omega^2=0 \label{Qeqn}
\end{gather}
for any constant $J$. This yields
the following third order dif\/ferential equation for $\eta(z)$:
\begin{gather}
 \eta''' - 6\eta \eta'' + 9\eta'^2 +(J-12)\big(\eta'-\half\eta^2\big)^2 =0 ,
\label{etaeqn}
\end{gather}
which reduces to \eqref{gch} by setting $y=3\eta$ and $108K=(12-J)$. Thus, the Chazy~III and Chazy~XII equations~\eqref{gch} can be interpreted as a certain dif\/ferential polynomial
{\it invariant} of an af\/f\/ine connection in a~one-dimensional complex manifold with a projective structure.

A projective structure on a one-dimensional complex manifold
$M \subseteq \mathbb{CP}^1$ is def\/ined by an atlas
of local coordinates with transition functions given by M\"obius transformations.
In a local coordinate chart, the M\"obius transformations are generated by the
vector f\/ields $\langle \partial_z, z\partial_z, z^2\partial_z \rangle$ isomorphic
to the Lie algebra $\mathfrak{sl}_2(\mathbb{C})$. A nontrivial
representation of $\mathfrak{sl}_2(\mathbb{C})$ is given by the
vector f\/ields $\langle u_1^2\partial_s, u_1u_2\partial_s, u_2^2\partial_s\rangle$
where $u_1(s)$, $u_2(s)$ is a pair of linearly
independent solutions of the complex Schr\"odinger equation
\begin{gather}
u''+\tf14 V(s)u=0 ,\label{ueqn}
\end{gather}
(where the factor $\tf14$ is inserted for convenience). Then the identif\/ication
of the vector f\/ield $\partial_z$ with $u_1^2\partial_s$ induces a local change
of coordinates $s \to z(s)$ on $M$ via the ratio
\begin{gather}
z(s) = \frac{u_2(s)}{u_1(s)} .\label{ratio}
\end{gather}
If $M$ is not simply connected, the monodromy group $G \subset {\rm GL}_2({\mathbb C})$
resulting from the analytic extensions of the pair
$(u_2, u_1) \to (au_2+bu_1, cu_2+du_1)$ along all
possible closed loops in $M$ acts projectively
on the ratio in~\eqref{ratio} via the M\"obius transformations
\begin{gather}
z \to \gamma(z) := \frac{az+b}{cz+d} , \qquad
\begin{pmatrix} a & b \\ c & d\end{pmatrix} \in G.
\label{gamma}
\end{gather}
The projectivized monodromy group
is the quotient group $\Gamma \cong G/\lambda I_2 \subseteq {\rm PSL}_2({\mathbb C})$,
where $I_2$ is the $2 \times 2$ identity matrix. A dif\/ferent choice for
the basis $(u_1,u_2)$ would lead to a dif\/ferent projective structure on $M$.

Note that the Schwarzian derivative is a dif\/ferential invariant of the projective transformation, i.e., $\{\gamma(z);s\}=\{z;s\}$. For a projective structure induced by the Schr\"odinger equation \eqref{ueqn}, $z(s)$ satisf\/ies the third order Schwarzian equation
\begin{subequations}
\begin{gather}
\{z;s\} = \half V(s) ,
\label{schwarz-a}
\end{gather}
which can be {\it linearized} via \eqref{ueqn} and
\eqref{ratio}. It follows that the inverse function $s(z)$ (if it exists
globally) is a projective invariant function on the manifold $M$ with the
automorphism $s(z) = s(\gamma(z))$, $\gamma \in \Gamma$, and satisf\/ies
\begin{gather}
\{s;z\} + \half V(s) s'(z)^2 =0 .
\label{schwarz-b}
\end{gather}
\end{subequations}
An af\/f\/ine connection on the one-dimensional complex manifold $M$ can be def\/ined
uniquely by its trivialization $\eta(s)=0$ in the projective invariant coordinate $s$.
Then the transformation
$s \to z(s)=u_2(s)/u_1(s)$ in terms of the solutions of the Schr\"odinger equation
leads to the following expression of the af\/f\/ine connection in the $z$-coordinate
\begin{gather*}\eta(z) = s''(z)/s'(z)= - z''(s)/z'(s)^2 = 2 u_1(s)u_1'(s) , \end{gather*}
using the transformation rule \eqref{eta}. Furthermore, from
\eqref{omega}, the dif\/ferential $\Omega(z)$ is
then given in terms of the Schr\"odinger potential $V(s)$ as
\begin{gather*} \Omega(z) = \{s;z\}= -\{z;s\}/z'(s)^2 = -\half u_1^4(s) V(s) . \end{gather*}
Consequently, the solutions of the Chazy III and XII equations~\eqref{gch},
which are equivalent to~\eqref{etaeqn}, can be expressed via
the solutions and the potential of the complex Schr\"odinger equation~\eqref{ueqn}. It is worth noting that if the af\/f\/ine connection is trivialized
in a dif\/ferent coordinate~$x(s)$ with ${\rm d}x = f(s){\rm d}s$, then
from~\eqref{eta}, $\eta(z){\rm d}z = {\rm d}\log(f(s)s'(z))$.
We will utilize this fact and the geometric framework discussed above to construct a solution method in Section~\ref{section3} for~\eqref{gch}, by exploiting its relationship to another nonlinear dif\/ferential system described below.

\subsection{The Halphen system and Schwarz triangle functions}\label{section2.2}
In 1881, Halphen considered a slight variant~\cite[p.~1405, equation~(5)]{Halphen1}
of the following nonlinear dif\/ferential system
\begin{gather}
w_1'=-w_2w_3+w_1(w_2+w_3)+\tau^2, \nonumber\\
w_2'=-w_3w_1+w_2(w_3+w_1)+\tau^2, \label{gdh}\\
w_3'=-w_1w_2+w_3(w_1+w_2)+\tau^2, \nonumber
\\ \tau^2 = \alpha^2(w_1-w_2)(w_2-w_3)+\beta^2(w_2-w_1)(w_1-w_3)+
\gamma^2(w_3-w_1)(w_2-w_3) ,\nonumber \end{gather}
for functions $w_i(z) \neq w_j(z)$, $i\neq j$, where $i,j \in \{1,2,3\}$, and
$\alpha$, $\beta$, $\gamma$ are constants.
A special case of this equation with $\alpha = \beta = \gamma = 0$ originally
appeared in Darboux's work of triply orthogonal surfaces on $\mathbb{R}^3$ in
1878 \cite{Darboux}. Its solution was given by Halphen~\cite{Halphen}
in 1881 in terms of hypergeometric functions. Subsequently, Chazy~\cite{Chazy3}
showed that $y(z):=2(w_1+w_2+w_3)$ satisf\/ies the Chazy~III equation
introduced in Section~\ref{section1}.
More recently, \eqref{gdh} was re-discovered as a certain symmetry
reduction of the self-dual Yang--Mills equations, and was referred to as the
generalized Darboux--Halphen (gDH) system~\cite{Ablowitz-H1,Ablowitz-H2}.

The gDH system can be solved via the Schr\"odinger equation~\eqref{ueqn} with the potential
\begin{gather}
V(s)=\frac{1-\alpha^2}{s^2}+\frac{1-\beta^2}{(s-1)^2}+ \frac{\alpha^2+\beta^2-\gamma^2-1}{s(s-1)} , \label{V}
\end{gather}
and def\/ining a projective structure on $M$ via the ratio~$z(s)$ in \eqref{ratio} as described in Section~\ref{section2.1}.
Note that in this case \eqref{ueqn} is a second order Fuchsian dif\/ferential
equation with three regular singular points, and
$\alpha$, $\beta$, $\gamma$ are the exponent dif\/ferences (for any
pair of linearly independent solutions $u_1$ and $u_2$) prescribed at the
singular points $0$, $1$ and $\infty$, respectively. The generators of the
projectivized monodromy group $\Gamma$ are determined by the exponent
dif\/ferences. If the gDH variables $w_1(z)$, $w_2(z)$, $w_3(z)$
are expressed in terms of the projective invariant inverse
function~$s(z)$ (and its derivatives) as follows:
\begin{gather}
w_1 = \frac{1}{2}\left[\log\left(\frac{s'}{s} \right)\right]', \quad
w_2 = \frac{1}{2}\left[\log\left(\frac{s'}{s-1} \right)\right]', \quad
w_3 = \frac{1}{2}\left[\log\left(\frac{s'}{s(s-1)} \right)\right]',\label{wdef}
\end{gather}
then a straightforward calculation shows that \eqref{gdh} reduces
to the Schwarzian equation \eqref{schwarz-b} for $s(z)$, where the constants
$\alpha$, $\beta$, $\gamma$ in $V(s)$ are the
same as those appearing in $\tau^2$ of~\eqref{gdh}.
Equation~\eqref{schwarz-b} is equivalent to~\eqref{schwarz-a} (after interchanging
the dependent and independent variables) which is then reduced to~\eqref{ueqn}.
Thus, the gDH system can be ef\/fectively {\it linearized}
by the Schr\"odinger equation \eqref{ueqn} with the potential~$V(s)$ given by~\eqref{V}.

The ratio $z(s)$ in \eqref{ratio} of
any two linearly independent solutions $u_1$, $u_2$ of \eqref{ueqn}
with the potential $V(s)$ given by~\eqref{V} def\/ines a conformal mapping
that was studied extensively by H.A.~Schwarz~\cite{Schwarz1} in~1873.
The map~$z(s)$ is, in general, branched at the regular singular
points $s=0, 1, \infty$.
However, if the parameters $\alpha$, $\beta$, $\gamma$ are either
zero or reciprocals of positive integers, and satisfy
$\alpha+\beta+\gamma < 1$, then the mapping~$z(s)$ def\/ines a plane
region D, which is tessellated by an inf\/inite number of
non-overlapping hyperbolic, circular triangles on the complex $z$-plane.
The interior of each triangle is an image of the upper (lower)-half
$s$-plane under the map $z(s)$ and its analytic extensions, and is
bounded by three circular arcs forming interior angles~$\alpha\pi$,~$\beta\pi$, and $\gamma\pi$ at the vertices $z(0)$, $z(1)$,
and $z(\infty)$, respectively. Two adjacent triangles are obtained via Schwarz ref\/lection
principle, and are images of each other under ref\/lection across
the circular arc that forms their common boundary.
In this case, the inverse~$s(z)$ is a~single-valued, meromorphic, automorphic function whose automorphism
group is the projective monodromy group $\Gamma$ associated with~\eqref{ueqn} and~\eqref{V}. That is, $s(\gamma(z)) = s(z)$
for all $\gamma \in \Gamma$ where~$\gamma(z)$ is the M\"obius transformation
def\/ined in~\eqref{gamma}.
When the exponent dif\/ferences satisfy the conditions prescribed
above, $\Gamma$ is a discrete subgroup of ${\rm PSL}(2,{\mathbb R})$, and turns out to be
the group of M\"obius transformations generated by an
even number of ref\/lections across the boundaries of the circular triangles.
This automorphic inverse function
\begin{gather*}s(z) := S(\alpha, \beta, \gamma; z)\end{gather*}
is called the Schwarz triangle function, and the automorphism group~$\Gamma$ is referred to as the triangle group.
It is worth noting that if $\alpha$, $\beta$, $\gamma$ in~\eqref{V} are either zero or reciprocals of positive integers,
but either $\alpha+\beta+\gamma =1$ or $\alpha+\beta+\gamma >1$,
then the map~$z(s)$ in~\eqref{ratio} tiles the $z$-plane
either into inf\/initely many plane triangles, or the extended $z$-plane
(Riemann sphere) into a f\/inite number of spherical triangles, respectively.
The triangle group $\Gamma$ is a simply or doubly periodic
group when $\alpha+\beta+\gamma =1$, and corresponds to one of the
four symmetry groups of the regular solids when $\alpha+\beta+\gamma >1$.
A detailed discussion of the automorphic groups can be found in the monograph~\cite{Ford}.

It follows from \eqref{schwarz-b} or from \eqref{ueqn} with
$V(s)$ as in \eqref{V} that the only possible singularities of $s(z)$
and its derivatives on the domain D are located at the vertices
of each triangle where~$s(z)$ takes the value of~0,~1, or~$\infty$.
The boundary of~D in the $z$-plane is a $\Gamma$-invariant circle~C which
is {\it orthogonal} to all three sides of each triangle and its
ref\/lected images. This orthogonal circle~C is the set of limit points
for the automorphic group $\Gamma$, and corresponds to a dense set
of essential singularities which form a {\it natural barrier} for the
function~$s(z)$. In its domain of existence D, the only possible
singularities of $s(z)$ are poles which correspond to the vertices
where $s(z) = \infty$.

In summary, when the parameters $\alpha$, $\beta$, $\gamma$
in \eqref{V} are either zero
or reciprocals of positive integers and $\alpha+\beta+\gamma <1$,
the general solution of \eqref{schwarz-b}
is obtained as the {\it unique} inverse of the ratio
\begin{gather}
z(s) = \frac{a u_2(s)+b u_1(s)}{cu_2(s) +du_1(s)}, \qquad
a,b,c,d \in {\mathbb C}, \qquad ad-bc =1,
\label{zdef}
\end{gather}
where $u_1$ and $u_2$ are two linearly independent solutions of \eqref{ueqn}.
The solution is single-valued and meromorphic inside a disk in the extended
$z$-plane, and can not be continued analytically across the boundary of
the disk. This boundary is movable as its center and radius are
completely determined by the initial conditions, which depend
on the complex parameters~$a$,~$b$,~$c$,~$d$.
Recently, a number of new nonlinear dif\/ferential equations
whose solutions possess movable natural boundaries have been
found~\cite{SC13, Maier11}.
These can be solved by f\/irst transforming them
into a~Schwarzian equation~\eqref{schwarz-b} and then following the
{\it linearization} scheme described above.

\section{Chazy XII and triangle functions}\label{section3}
In this section we present a solution method for the Chazy XII equation
based on the geometric approach discussed in Section~\ref{section2}. Specif\/ically, we exploit
the fact that the Chazy XII equation in~\eqref{gch} is equivalent to
the coordinate independent form in~\eqref{Qeqn}
for the quadratic dif\/ferential~$\Omega$. We express the af\/f\/ine connection $\eta$ associated with~$\Omega$
via the gDH variables introduced in Section~\ref{section2.2} so that the Chazy~XII
equation can be expressed in terms of the Schr\"odinger potential~$V(s)$
in~\eqref{V} and its derivatives, in a simple algebraic fashion. Then the
solution of the Chazy XII equation is obtained via the
Schwarz triangle function $S(\alpha, \beta, \gamma; z)$ for
specif\/ic values of the triple $(\alpha, \beta, \gamma)$, which we identify.
Recall that in this article we only consider the Chazy~XII equation.
The parametrization of the Chazy~III equation by Schwarz
triangle functions was studied in~\cite{CA09}.

\subsection{The gDH system and the Chazy XII equation}\label{section3.1}
Note f\/irst from \eqref{wdef} and \eqref{eta} that
$2w_i={\rm d}\log(f_i(s)s'(z))/{\rm d}z$ transforms as an af\/f\/ine connection
that is trivialized in the coordinate $x_i(s)$ with ${\rm d}x_i=f_i(s){\rm d}s$ for
\begin{gather*}f_1(s)= \frac{1}{s}, \qquad f_2(s)=\frac{1}{s-1}, \qquad
f_3(s)=\frac{1}{s(s-1)} .\end{gather*}
In particular, under a M\"obius transformation $z \to \gamma(z)$
given by \eqref{gamma} with $ad-bc=1$, $w_i$~transforms as
\begin{gather*}
w_i(\gamma(z)) = (cz+d)^2w_i(z)+ c(cz+d), \qquad \gamma \in \Gamma ,
\end{gather*}
which leaves the gDH system \eqref{gdh} invariant.
Next let us def\/ine the function
\begin{gather}
y(z) := a_1w_1 + a_2w_2 + a_3w_3,\label{ydef}
\end{gather}
in terms of the gDH variables $w_i$, where the coef\/f\/icients $a_i$
are nonnegative constants.
Then from \eqref{wdef}, $y(z)$ can be expressed in terms of~$s(z)$ and its derivatives as follows:
\begin{gather}
y(z) = \frac{p}{2}\, \frac{\phi'(z)}{\phi(z)}, \quad \qquad
\phi(z) = \frac{s'(z)}{s(z)^{1-\alpha_1}(s(z)-1)^{1-\beta_1}},
\label{ypara}
\end{gather}
with $p:= a_1+a_2+a_3$, $\alpha_1:= a_2/p$ and
$\beta_1:= a_1/p$ and $0 \leq \alpha_1, \beta_1 \leq 1$.
If we set $p=6$ above, then it is possible to def\/ine an af\/f\/ine connection
$\eta$ such that in the $z$-coordinate,
\begin{gather*} \eta(z)=\frac{y(z)}{3}=\frac{{\rm d}\log(\phi)}{{\rm d}z}. \end{gather*}
It follows from \eqref{eta} that $\eta$ is trivialized in
the coordinate $x$ such that ${\rm d}x = \phi(z){\rm d}z = f(s){\rm d}s$
where $f(s)=s^{-1+\alpha_1}(s-1)^{-1+\beta_1}$. Moreover,
in the $s$-coordinate,
\begin{gather}
\eta(s)= (\log f)'(s) = -\frac{1-\alpha_1}{s} - \frac{1-\beta_1}{s-1} ,
\qquad \Omega(s) = \eta'(s)-\half \eta^2(s) ,
\label{etas}
\end{gather}
which are rational functions
of $s$. The curvature in the $z$-coordinate is obtained using the
transformation property \eqref{omega} as
\begin{gather*}\Omega(z) = (\Omega(s) - \{z;s\})s'(z)^2 =
(\Omega(s) - \half V(s))s'(z)^2 := -V_2(s)s'(z)^2, \end{gather*}
where the second equality above follows from \eqref{schwarz-a} with
$V(s)$ as in \eqref{V}. Then the covariant derivatives of the
quadratic dif\/ferential $\Omega$ are given by the transformation rule
\begin{gather*}\nabla^m\Omega(z) = -(\nabla^m V_2(s))s'(z)^{m+2}
:= -V_{m+2}(s)s'(z)^{m+2} , \qquad m \geq 1 ,\end{gather*}
where $V_{m+2}(s)$ are rational functions of $s$ def\/ined recursively
for $n \geq 2$ as
\begin{gather}
V_{n+1}(s) = \nabla V_n(s) = V_n'(s) - n\eta(s)V_n(s),
\qquad V_2(s) = \half V(s) - \eta'(s) + \half \eta(s)^2 ,
\label{Vk}
\end{gather}
with $\eta(s)$ def\/ined in \eqref{etas}.

Our next goal is to derive conditions under which $\Omega(z)$ will
satisfy the projective-invariant equation \eqref{Qeqn}, equivalently,
the af\/f\/ine connection $\eta(z)$ will satisfy \eqref{etaeqn}.
Then the function $y(z)=3\eta(z)$ def\/ined in \eqref{ypara} with $p=6$ will
solve the Chazy XII equation in \eqref{gch}. Henceforth, the
value $p=6$ will be used throughout the rest of this article.

It follows from the expressions for $\Omega(z)$ and
its covariant derivatives obtained above, that for $m=2$,
\eqref{Qeqn} implies a simple
algebraic relation between rational functions $V_2(s)$
and $V_4(s)$, namely,
\begin{gather}
V_4(s) = J V_2^2(s), \qquad J \neq 12 ,
\label{Veq}
\end{gather}
that should hold for all $s$. This condition imposes certain restrictions
on the parameters $(\alpha, \beta, \gamma)$ and $(\alpha_1, \beta_1)$
appearing in the functions $V_2(s)$ and $V_4(s)$. It is worth pointing
out here that \eqref{Veq} together with \eqref{ydef}
lead to the central result advertised in Section~\ref{section1}.

In what follows, we
will systematically determine the sets of parameters for which
\eqref{Veq} holds. In particular, we will identify the values of
the triple $(\alpha, \beta, \gamma)$ in the Schwarz triangle
functions $S(\alpha, \beta, \gamma; z)$ which determine the solution
of the Chazy XII equation $y(z)$ via \eqref{ypara}.

\subsection{The Schwarz function parametrization}\label{section3.2}
From equations \eqref{V}, \eqref{etas} and \eqref{Vk},
the rational function $V_2(s)$ can be written as
\begin{gather}
 V_2(s) = \frac{1}{2}\left[\frac{A}{s^2}+\frac{B}{(s-1)^2}+\frac{C}{s(s-1)}\right] ,
\nonumber \\
A= \alpha_1^2-\alpha^2, \qquad B= \beta_1^2-\beta^2, \qquad
C=\gamma_1^2-A-B-\gamma^2, \label{V2}
\end{gather}
where the parameters $(\alpha_1,\beta_1,\gamma_1)$ are def\/ined as follows:
\begin{gather}
\alpha_1=\frac{a_2}{6}, \qquad \beta_1=\frac{a_1}{6} ,
\qquad \gamma_1 = \frac{a_3}{6}, \qquad 0 \leq \alpha_1, \beta_1, \gamma_1 \leq1,
\qquad \alpha_1+\beta_1+\gamma_1=1 .
\label{convex}
\end{gather}
Then $V_4(s)$ is readily computed from the
recurrence relation in \eqref{Vk}. Upon substituting the expressions for $V_2(s)$
and $V_4(s)$ into \eqref{Veq} and rationalizing the resulting expression, one
f\/inds that \eqref{Veq} is satisf\/ied if and only if
\begin{gather*}u_1s^4+u_2s^3(s-1)+u_3s^2(s-1)^2+u_4s(s-1)^3+u_5(s-1)^4=0 \end{gather*}
for all values of $s$. The last identity is equivalent to the vanishing
of the coef\/f\/icients $u_i$, i.e.,{\samepage
\begin{subequations}\label{u}
\begin{gather}
u_1 := JB^2-12B\beta_1^2 =0, \label{u1} \\
u_2 := 2JBC-4[(1-\alpha_1)(1-6\beta_1)B +
\half (1-2\beta_1)(1-3\beta_1)C] =0 , \label{u2} \\
u_3 :=J(2AB+C^2) -4\big[(2-3\beta_1)(1-\beta_1)A+(2-3\alpha_1)(1-\alpha_1)B
\nonumber \\
\hphantom{u_3 :=}{} + \half[(2-3\beta_1)(1-2\alpha_1)+(2-3\alpha_1)(1-2\beta_1)]C\big] =0 ,
\label{u3} \\
u_4 :=2JAC-4[(1-\beta_1)(1-6\alpha_1)A+\half(1-2\alpha_1)(1-3\alpha_1)C]=0 ,
\label{u4} \\
u_5 := JA^2-12A\alpha_1^2=0 . \label{u5}
\end{gather}
\end{subequations}}

\noindent
System \eqref{u} represents a set of coupled, algebraic equations for
the parameters $(\alpha,\beta,\gamma)$, $(\alpha_1, \beta_1)$ and $J$
subject to the conditions
\begin{gather}
\alpha, \beta, \gamma \in \big\{\tf1n , \, n \in \mathbb{N}\big\} \cup \{0\},
\qquad \alpha+\beta+\gamma<1, \qquad 0 \leq \alpha_1, \beta_1 \leq 1,
\label{cond}
\end{gather}
and $J \neq 12$. The case $J=0$ corresponds
to $K=\tf19$ or $k=0$ in \eqref{gch}. The Chazy XII equation for this
case can be linearized via Airy's equation as mentioned in
Section~\ref{section1}. Hence, the $J=0$ case is distinct from
the cases considered here which correspond to Schr\"odinger potential
$V(s)$ in \eqref{V} with three regular singular points. Furthermore,
the following lemma is proven in Appendix~\ref{appendixA}.
\begin{Lemma} \label{lemma1}
If $J=0$, then \eqref{u} has no admissible solution satisfying conditions
\eqref{cond}.
\end{Lemma}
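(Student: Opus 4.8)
The plan is to set $J=0$ throughout and to exploit the fact that the two outer equations of \eqref{u} then factor completely: \eqref{u1} becomes $B\beta_1^2=0$ and \eqref{u5} becomes $A\alpha_1^2=0$. Hence each of $A$, $B$ must vanish unless the corresponding parameter $\alpha_1$, $\beta_1$ is zero, which splits the argument into four branches according to whether $\alpha_1=0$ or $A=0$ and whether $\beta_1=0$ or $B=0$. Before treating them I would record the involution exchanging $(\alpha_1,A,\alpha)\leftrightarrow(\beta_1,B,\beta)$: using the defining relations in \eqref{V2} one checks that it swaps \eqref{u2} with \eqref{u4}, fixes \eqref{u3}, and preserves the constraints \eqref{convex} and \eqref{cond}. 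This identifies the branch $\{\alpha_1=0,\,B=0\}$ with $\{A=0,\,\beta_1=0\}$ and leaves only three cases to analyze. The recurring mechanism is that $A=0$ forces $\alpha=\alpha_1$ and $B=0$ forces $\beta=\beta_1$, by the nonnegativity of these quantities in \eqref{cond}, \eqref{convex}, while $C=0$ forces $\gamma=\gamma_1$; whenever all three coincide, $\alpha_1+\beta_1+\gamma_1=1$ yields $\alpha+\beta+\gamma=1$, contradicting $\alpha+\beta+\gamma<1$.

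In the branch $\alpha_1=\beta_1=0$ (so $\gamma_1=1$), substituting $A=-\alpha^2$, $B=-\beta^2$ and $C=1+\alpha^2+\beta^2-\gamma^2$ should reduce \eqref{u2} and \eqref{u4} to $2B+C=0$ and $2A+C=0$, which I expect to give $\alpha=\beta$ together with $\gamma=1$, already violating \eqref{cond}. In the branch $\alpha_1=0$, $B=0$ (so $\beta=\beta_1$ and $\gamma_1=1-\beta_1$), equations \eqref{u4} and \eqref{u3} are linear in $A$ and $C$; using \eqref{u4} to eliminate $C$ from \eqref{u3} I anticipate the factorization $(1-\beta_1)(2\beta_1-1)A=0$, so $A=0$, $\beta_1=\tf12$, or $\beta_1=1$. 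The first returns $\alpha=0$ and $\gamma=\gamma_1$, the second returns $\gamma=\tf12=\gamma_1$, both giving $\alpha+\beta+\gamma\ge1$; the third gives $\beta=1$, excluded since \eqref{cond} forces $\beta<1$.

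The remaining branch $A=B=0$ (so $\alpha=\alpha_1$, $\beta=\beta_1$) is where I expect the genuine obstacle. Here \eqref{u2}, \eqref{u4} and \eqref{u3} are all proportional to $C$, so the analysis bifurcates: if $C=0$ then $\gamma=\gamma_1$ and $\alpha+\beta+\gamma=1$ as above, but the subcase $C\neq0$ must be excluded separately and is the hard part. In that subcase \eqref{u2} and \eqref{u4} force $\alpha_1,\beta_1\in\{\tf12,\tf13\}$, and I would simply test these finitely many pairs against \eqref{u3}: only $\alpha_1=\beta_1=\tf12$ survives, whereupon $\gamma_1=0$ and $\alpha+\beta=1$, contradicting $\alpha+\beta+\gamma<1$ once more. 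Assembling the contradictions from all three branches completes the proof.
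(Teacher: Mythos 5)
Your proposal is correct and takes essentially the same route as the paper's proof in Appendix~\ref{appendixA}: the same four-branch split coming from the factorization of \eqref{u1} and \eqref{u5} at $J=0$, the same use of the $(\alpha_1,A,\alpha)\leftrightarrow(\beta_1,B,\beta)$ symmetry to merge the two mixed branches, and the same terminal contradictions with $\alpha+\beta+\gamma<1$ (in particular, the $A=B=0$ branch with $C\neq 0$ is handled identically, via $\alpha_1,\beta_1\in\{\tf12,\tf13\}$ and elimination by \eqref{u3}). The only cosmetic differences are which equations you invoke inside a branch (e.g., \eqref{u2}, \eqref{u4} instead of the paper's \eqref{u3} when $\alpha_1=\beta_1=0$, and eliminating $C$ rather than $B$ in the mixed branch), and all of those computations check out.
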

Henceforth, we take $J \neq 0$ through the remainder of the main text of
this paper.
\begin{Lemma} \label{lemma2}
If a triple $(\alpha, \beta, \gamma)$ of non-negative numbers
satisfying $\alpha+\beta+\gamma < 1$
solves the system \eqref{u} with $J \neq 12$,
then $\alpha \neq 0$, $\beta \neq 0$, and $\gamma \neq 0$.
\end{Lemma}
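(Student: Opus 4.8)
The plan is to prove the contrapositive by a case analysis: assume one of $\alpha,\beta,\gamma$ vanishes and derive a contradiction with $\alpha+\beta+\gamma<1$ together with $J\neq 12$ (and $J\neq 0$, which we may now assume by Lemma~\ref{lemma1}). By the symmetry of the system under the interchange $(\alpha,\alpha_1)\leftrightarrow(\beta,\beta_1)$ that swaps \eqref{u1}$\leftrightarrow$\eqref{u5} and \eqref{u2}$\leftrightarrow$\eqref{u4} while fixing \eqref{u3}, it suffices to treat the two essentially distinct cases $\beta=0$ (a ``boundary'' variable appearing in $B$) and $\gamma=0$ (the variable appearing only through $C$). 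First I would dispose of $\beta=0$. Then $B=\beta_1^2$ from \eqref{V2}, so the endpoint equation \eqref{u1} becomes $\beta_1^2\big(J\beta_1^2-12\beta_1^2\big)=\beta_1^4(J-12)=0$; since $J\neq 12$ this forces $\beta_1=0$, whence $B=0$.

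With $\beta=0$ and $\beta_1=0$ in hand I would substitute $B=0$ back into the remaining equations. Equation \eqref{u2} collapses to $-2(1-2\beta_1)(1-3\beta_1)C=-2C=0$ (using $\beta_1=0$), so $C=0$ as well, provided the numerical factor $(1-2\beta_1)(1-3\beta_1)=1$ is nonzero, which it is. Now $B=C=0$ reduces the quartic system to just \eqref{u3} and \eqref{u5}. Equation \eqref{u5} gives $A^2(J-12)=\dots$, analogously forcing a relation on $A=\alpha_1^2-\alpha^2$, and \eqref{u3} with $B=C=0$ reads $2JAB+\dots = -4(2-3\beta_1)(1-\beta_1)A=-8A=0$, so $A=0$ too. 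But $A=\alpha_1^2-\alpha^2=0$ with $C=\gamma_1^2-A-B-\gamma^2=\gamma_1^2-\gamma^2=0$ means $\alpha_1=\alpha$ and $\gamma_1=\gamma$ (taking non-negative roots), while $\beta_1=\beta=0$; combined with the convexity constraint $\alpha_1+\beta_1+\gamma_1=1$ from \eqref{convex} this yields $\alpha+\gamma=1$, contradicting $\alpha+\beta+\gamma=\alpha+\gamma<1$. This settles the $\beta=0$ case, and by symmetry the $\alpha=0$ case.

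For $\gamma=0$ the argument is slightly different because $\gamma$ enters only through $C=\gamma_1^2-A-B-\gamma^2=\gamma_1^2-A-B$. Here I would not get an immediate collapse from a single endpoint equation; instead I expect the main obstacle to be showing that the two endpoint equations \eqref{u1}, \eqref{u5} together with the coupling equations still force a degeneracy. The cleanest route is to observe that \eqref{u1} and \eqref{u5} give $JB=12\beta_1^2$ and $JA=12\alpha_1^2$ whenever $B,A\neq0$; substituting these into the ``off-diagonal'' relations \eqref{u2}, \eqref{u4} eliminates $J$ and produces linear relations among $A$, $B$, $C$, $\alpha_1$, $\beta_1$, while \eqref{u3} gives one further relation. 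I anticipate that solving this reduced linear-in-$C$ system and re-imposing $\gamma=0$ (i.e.\ $C=\gamma_1^2-A-B$) forces either $A=0$ or $B=0$, returning us to the previous case, or else forces $\alpha_1+\beta_1=1$ and hence $\gamma_1=0$, which together with $A=\alpha_1^2-\alpha^2\ge 0$ type sign constraints again collides with $\alpha+\beta+\gamma<1$. The degenerate subcases where some factor like $(1-6\alpha_1)$ or $(1-2\alpha_1)$ vanishes must be checked separately, and verifying that none of these escape the contradiction is the delicate bookkeeping step; I would organize it as a short finite table of vanishing-factor possibilities rather than attempt a uniform elimination.
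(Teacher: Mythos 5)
Your handling of the cases $\alpha=0$ and $\beta=0$ is correct and is essentially the paper's own argument: with $\beta=0$ one has $B=\beta_1^2$, so \eqref{u1} reads $\beta_1^4(J-12)=0$, forcing $\beta_1=0$ and hence $B=0$; then \eqref{u2} gives $C=0$, \eqref{u3} gives $A=0$, and $\alpha+\gamma=\alpha_1+\gamma_1=1$ contradicts $\alpha+\beta+\gamma<1$. The symmetry $(\alpha,\alpha_1,A)\leftrightarrow(\beta,\beta_1,B)$, which exchanges \eqref{u1}$\leftrightarrow$\eqref{u5} and \eqref{u2}$\leftrightarrow$\eqref{u4} while fixing \eqref{u3}, is genuine, so reducing $\alpha=0$ to $\beta=0$ is legitimate (the paper treats $\alpha=0$ first and notes $\beta=0$ is analogous; same content).

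The case $\gamma=0$, however, is a genuine gap: what you offer there is a plan with anticipated outcomes (``I expect\ldots'', ``I anticipate\ldots''), not a proof, and the one idea that makes this case tractable is missing. The paper first pins down $\gamma_1$ by a summation trick: adding all five equations gives the identity $u_1+\cdots+u_5=JD^2-12\gamma_1^2D$ with $D:=A+B+C$; since $\gamma=0$ means $D=\gamma_1^2$, this yields $(J-12)\gamma_1^4=0$, hence $\gamma_1=0$, hence $\alpha_1+\beta_1=1$ and $C=-A-B$. Only with these facts in hand does your proposed dichotomy become a finite check: if $A=0$ or $B=0$, the reduced relations \eqref{JAB-a}, \eqref{JAB-b} force $A=B=0$, hence $C=0$ and $\alpha+\beta=\alpha_1+\beta_1=1$ --- and note that $A=0$ does \emph{not} ``return us to the previous case,'' since $A=0$ gives $\alpha=\alpha_1$, not $\alpha=0$; if instead $A,B\neq 0$, then $JA=12\alpha_1^2$, $JB=12\beta_1^2$ turn \eqref{JAB-a}, \eqref{JAB-b} into two polynomial equations in $(\alpha_1,\beta_1)$ alone, and one must actually verify that they have no common root on the line $\alpha_1+\beta_1=1$ --- a computation you defer to ``bookkeeping.'' Without establishing $\gamma_1=0$ first, your elimination keeps the unknown $\gamma_1$ inside $C=\gamma_1^2-A-B$ and is far less constrained than you suppose. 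Two further inaccuracies: the sign constraint you invoke ($A=\alpha_1^2-\alpha^2\geq 0$) does not exist, since $A$ may be negative; and \eqref{u3} does not become linear in $(A,B,C)$ under your substitution, because the term $JC^2$ survives --- the paper sidesteps this by using \eqref{u3} only through the summation identity.
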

\begin{proof}
If $\alpha=0$, then $A = \alpha_1^2$ from \eqref{V2}. The condition~\eqref{u5}
with $J \neq 12$ implies $\alpha_1=0$. Hence, $A=0$. Then equations
\eqref{u3} and \eqref{u4} imply $B=0$ and $C=0$.
Consequently, $\beta=\beta_1$, $\gamma=\gamma_1$ from~\eqref{V2}.
Therefore, $\beta+\gamma = \beta_1+\gamma_1 =1$ contradicting
$\alpha+\beta+\gamma < 1$.

If $\beta=0$, the proof is similar to above, starting from condition
\eqref{u1} f\/irst, then using equations \eqref{u2} and \eqref{u3}.

If $\gamma=0$ then $D:=A+B+C = \gamma_1^2$. The condition
$\sum\limits_{i=1}^5u_i=0$ in \eqref{u} yields
$JD^2=12\gamma_1^2D$, which implies that $\gamma_1=0$. Hence,
$\alpha_1+\beta_1=1$ and $D=0$. Using the last condition to eliminate~$C$ from \eqref{u2} and~\eqref{u4}, one obtains
\begin{subequations}
\begin{gather}
JAB = (1-2\beta_1)(1-3\beta_1)A-(1-6\beta_1)(1-2\alpha_1-\beta_1)B ,
\label{JAB-a} \\
JAB = -(1-6\alpha_1)(1-2\beta_1-\alpha_1)A + (1-2\alpha_1)(1-3\alpha_1)B ,
\label{JAB-b}
\end{gather}
\end{subequations}
where \eqref{u1} and \eqref{u5} are also used to eliminate terms
involving $A^2$ and $B^2$. It follows from~\eqref{JAB-a} and~\eqref{JAB-b} together
with $\alpha_1+\beta_1=1$ that if either $A=0$,
or $B=0$, then both~$A$ and~$B$ vanish. Hence, $\alpha+\beta =
\alpha_1+\beta_1=1$, which contradicts $\alpha+\beta+\gamma < 1$.

Finally, let $A \neq 0$ and $B\neq 0$. Then
\eqref{u5} and \eqref{u1} imply that
$JA=12\alpha_1^2$, $JB=12 \beta_1^2$.
Substituting these into~\eqref{JAB-a} and~\eqref{JAB-b} to
eliminate~$A$ and~$B$, yields two equations
for~$\alpha_1$ and~$\beta_1$. The resulting equations have no solution
$(\alpha_1,\beta_1)$ that satisf\/ies the condition $\alpha_1+\beta_1=1$.
Thus, $\gamma$ can not be 0.
\end{proof}
One can also make the inequalities for $\alpha_1$, $\beta_1$, $\gamma_1$
in \eqref{convex} strict, i.e., $0 < \alpha_1, \beta_1, \gamma_1 < 1$.
\begin{Lemma} \label{lemma3}
If $\alpha_1$, $\beta_1$, $\gamma_1$ satisfy \eqref{convex} and
solve the system \eqref{u} with $(\alpha,\beta,\gamma)$ as in Lemma~{\rm \ref{lemma2}},
then $\alpha_1, \beta_1, \gamma_1 \notin \{0,1\}$.
\end{Lemma}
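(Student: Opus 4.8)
The plan is to reduce everything to the single claim that none of $\alpha_1$, $\beta_1$, $\gamma_1$ can vanish. Indeed, by \eqref{convex} these three numbers are nonnegative and sum to~$1$, so if any one of them equalled~$1$ the other two would both be~$0$; hence once each is shown to be nonzero, the value~$1$ is automatically excluded as well. Thus the entire statement follows from $\alpha_1,\beta_1,\gamma_1 \neq 0$, which I would establish case by case, invoking $J\neq 0$ (valid after Lemma~\ref{lemma1}) together with the nonvanishing of $\alpha$, $\beta$, $\gamma$ supplied by Lemma~\ref{lemma2}.

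For $\alpha_1=0$ I would use \eqref{u5}: setting $\alpha_1=0$ collapses $u_5=JA^2-12A\alpha_1^2$ to $JA^2=0$, and since $J\neq 0$ this forces $A=0$. But from \eqref{V2} we have $A=\alpha_1^2-\alpha^2=-\alpha^2$ when $\alpha_1=0$, so $A=0$ gives $\alpha=0$, contradicting Lemma~\ref{lemma2}. The case $\beta_1=0$ is handled identically, now starting from \eqref{u1}: here $\beta_1=0$ reduces $u_1=JB^2-12B\beta_1^2$ to $JB^2=0$, hence $B=0=\beta_1^2-\beta^2=-\beta^2$ and $\beta=0$, again contradicting Lemma~\ref{lemma2}.

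The case $\gamma_1=0$ is the one that requires more care and is the main obstacle, because no single equation in \eqref{u} isolates $\gamma_1$ the way \eqref{u1} and \eqref{u5} isolate $\beta_1$ and $\alpha_1$. Here I would appeal to the aggregate relation already extracted in the proof of Lemma~\ref{lemma2}: summing the five equations, $\sum_{i=1}^5 u_i=0$ yields $JD^2=12\gamma_1^2 D$ with $D:=A+B+C=\gamma_1^2-\gamma^2$. Putting $\gamma_1=0$ reduces this to $JD^2=0$, so $D=0$ since $J\neq 0$; but $D=-\gamma^2$ when $\gamma_1=0$, whence $\gamma=0$, once more contradicting Lemma~\ref{lemma2}.

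Combining the three cases shows $\alpha_1,\beta_1,\gamma_1>0$, and together with $\alpha_1+\beta_1+\gamma_1=1$ this yields $\alpha_1,\beta_1,\gamma_1\notin\{0,1\}$, as required. I expect the only subtlety to be confirming the identity $\sum_{i=1}^5 u_i=0\Rightarrow JD^2=12\gamma_1^2 D$ used in the $\gamma_1=0$ step; since this is precisely the relation derived within Lemma~\ref{lemma2}, it can be quoted directly rather than re-derived, keeping the argument short.
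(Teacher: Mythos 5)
Your proof is correct and follows essentially the same route as the paper's: \eqref{u5} and \eqref{u1} (with $J\neq 0$) rule out $\alpha_1=0$ and $\beta_1=0$, the summed relation $JD^2=12\gamma_1^2D$ from Lemma~\ref{lemma2} rules out $\gamma_1=0$, and the constraint $\alpha_1+\beta_1+\gamma_1=1$ with nonnegativity excludes the value~$1$. The only cosmetic difference is that you perform the reduction of the ``$=1$'' cases up front, whereas the paper handles them midway through; the substance is identical.
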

\begin{proof}
If $\alpha_1=0$, then it follows from \eqref{u5} that $A=0$.
Then, \eqref{V2} implies that $\alpha = \alpha_1=0$, contradicting Lemma~\ref{lemma2}.
Thus $\alpha_1 \neq 0$. A similar argument starting from \eqref{u1}
shows that $\beta_1 \neq 0$.

If $\alpha_1 =1$, then \eqref{convex} implies that $\beta_1=0$, which is
not possible. Similarly, $\beta_1=1$ or $\gamma_1=1$ is not possible.

Finally, assume that $\gamma_1 = 0$. Since $JD^2 = 12\gamma_1^2D$
where $D:= A+B+C$ (see Lemma~\ref{lemma2}), it follows that $D=0$.
Then, \eqref{V2} implies that $\gamma = \gamma_1=0$,
contradicting Lemma~\ref{lemma2}.
\end{proof}

Lemma \ref{lemma2} implies that
$\alpha, \beta, \gamma \in \{\tf{1}{n}, \, n \in \mathbb{N} \}$
in \eqref{cond}.
Let $s(z)=S(\alpha,\beta,\gamma;z)$ be the triangle function
associated with those parameters, and let $z_0$ be
a vertex of a triangle in the domain of existence D of $s(z)$
such that $s(z_0) := s_0 \in \{0,1,\infty\}$. The map $z(s)$ def\/ined
via the equa\-tions~\eqref{ratio}, \eqref{ueqn} and \eqref{V}, behaves
locally near $s=s_0$ as (see, e.g.,~\cite{Nehari})
\begin{gather*}z(s)=z_0+(s-s_0)^{\mu}\psi_1(s), \qquad s_0 \in \{0,1\} \qquad
\mbox{and} \\
 z(s)=z_0+s^{-\mu}\psi_2\big(s^{-1}\big), \qquad s_0=\infty ,\end{gather*}
where $\psi_i$ is analytic near $s_0$ with $\psi_i(s_0) \neq 0$, and
$\mu \in \{\alpha, \beta, \gamma\}$ is the corresponding
exponent dif\/ference at the singular point $s_0$. Consequently, the inverse
$s(z)$ is single-valued function def\/ined locally as
\begin{gather*}s(z) = s_0 + (z-z_0)^m\phi_1(z), \qquad z_0 \in \{z(0), z(1)\}\qquad
\mbox{and} \\ s(z) = (z-z_0)^{-m}\phi_2(z) , \qquad z_0=z(\infty) ,\end{gather*}
where $\phi_i(z)$ is analytic in the neighborhood of $z=z_0$ with
$\phi_i(z_0) \neq 0$, and $m=\mu^{-1} \in \mathbb{N}$.
Thus, $s-s_0$ has a zero of order $\tf1\alpha$, $\tf1\beta$ at the vertices
$z_0=z(0), z(1)$, respectively, and $s(z)$ has a~pole of order
$\tf1\gamma$ at the vertex $z_0=z(\infty)$ of each triangle in the domain~D.
As mentioned earlier in Section~\ref{section2}, it is suf\/f\/icient to examine the behavior
of the function $s(z)$ and its derivatives near the vertices $z(0)$, $z(1)$,
and $z(\infty)$ of just one triangle inside the domain~D. The Schwarz
ref\/lection principle and the automorphic property then ensure
that $s(z)$ will have the same behavior at the vertices of the
ref\/lected triangles in~D.
A~straightforward calculation using the above behavior of~$s(z)$ in~\eqref{ypara} shows that $y(z)$ is a meromorphic function in~D with simple
poles at the vertices~$z(0)$,~$z(1)$,~$z(\infty)$ of each triangle in D with
the residues
\begin{gather*}\operatorname{Res}_{z(0)}=3(\tf{\alpha_1}{\alpha}-1),
\qquad \operatorname{Res}_{z(1)}=3(\tf{\beta_1}{\beta}-1),
\qquad \operatorname{Res}_{z(\infty)}=3(\tf{\gamma_1}{\gamma}-1). \end{gather*}
Of course, at the boundary of the domain~D, $y(z)$ inherits the same natural
barrier of essential singularities as the function~$s(z)$. Note that it is
possible for $y(z)$ to be analytic at a vertex $z_0$ in the interior of~D
provided that the residue vanishes at $z_0$, i.e., if either $\alpha=\alpha_1$,
$\beta=\beta_1$, or $\gamma=\gamma_1$. However, it is impossible for all three
residues to vanish simultaneously because then
$\alpha+\beta+\gamma = \alpha_1+\beta_1+\gamma_1 =1$, which violates the
condition $\alpha+\beta+\gamma < 1$ in~\eqref{cond}.
The discussion above concerning the singularity structure of~$y(z)$
in the interior of its domain of def\/inition~D
can be summarized in the following proposition.
\begin{Proposition} \label{mero}
The solution $y(z)$ of the Chazy XII equation given by~\eqref{ypara} in terms of the triangle function
$s(z)=S(\alpha, \beta, \gamma; z)$ where $\alpha, \beta, \gamma$
satisfy \eqref{cond}, is meromorphic with only simple poles
inside its domain of definition {\rm D}. These poles can
only occur at the vertices~$z(0)$,~$z(1)$ and~$z(\infty)$ of the
circular triangles tessellating the domain~{\rm D}. Moreover, $y(z)$
must have at least one simple pole at one of the vertices of each triangle.
\end{Proposition}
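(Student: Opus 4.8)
The plan is to exploit the fact that, up to the constant factor $3$, the function $y(z)$ is a logarithmic derivative and is therefore automatically single-valued even though $\phi$ itself carries the fractional powers $s^{1-\alpha_1}$ and $(s-1)^{1-\beta_1}$. First I would rewrite \eqref{ypara} with $p=6$ as
\[
y(z) = 3(\log\phi)'(z) = 3\left[\frac{s''}{s'} - (1-\alpha_1)\frac{s'}{s} - (1-\beta_1)\frac{s'}{s-1}\right],
\]
an expression that is manifestly single-valued and meromorphic wherever $s(z)$ is meromorphic and $s'(z)\neq 0$, the branch ambiguities having cancelled upon differentiating the logarithm. Since $s(z)=S(\alpha,\beta,\gamma;z)$ is single-valued and meromorphic throughout {\rm D}, with poles only at the vertices $z(\infty)$, and since $z(s)$ is a conformal bijection on the interior of each triangle and branched only at $s=0,1,\infty$ (so that $s'(z)$ is finite and nonvanishing there), the three terms above can become singular only where $s\in\{0,1,\infty\}$ or where $s'$ acquires a zero or pole. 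All such points are the triangle vertices $z(0)$, $z(1)$, $z(\infty)$, which establishes the location assertion.

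Next I would perform the local analysis at each vertex using the expansions of $s(z)$ recorded immediately before the proposition. Near $z(0)$, where $s\sim(z-z_0)^{1/\alpha}$, the term $s''/s'$ contributes a simple pole of residue $1/\alpha-1$, the term $-(1-\alpha_1)s'/s$ contributes $-(1-\alpha_1)/\alpha$, and the remaining term stays analytic since $s\to 0$; summing gives a simple pole of residue $3(\alpha_1/\alpha-1)$. The computation at $z(1)$ is identical after interchanging the roles of $s$ and $s-1$, yielding $3(\beta_1/\beta-1)$. At $z(\infty)$, where $s\sim(z-z_0)^{-1/\gamma}$, both $s$ and $s-1$ diverge at the same rate and one uses $\gamma_1=1-\alpha_1-\beta_1$ from \eqref{convex} to collapse the three contributions into a single simple pole of residue $3(\gamma_1/\gamma-1)$. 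In each case the worst singularity is a simple pole, which proves that $y$ is meromorphic with only simple poles.

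For the final assertion I would argue by contradiction: if $y$ were regular at all three vertices of some triangle, the three residues would vanish simultaneously, forcing $\alpha=\alpha_1$, $\beta=\beta_1$, $\gamma=\gamma_1$ and hence $\alpha+\beta+\gamma=\alpha_1+\beta_1+\gamma_1=1$ by \eqref{convex}, in direct conflict with $\alpha+\beta+\gamma<1$ in \eqref{cond}. Therefore at least one residue is nonzero and each triangle carries at least one simple pole. Because the residues depend only on which of the values $0$, $1$, $\infty$ the function $s$ takes at a given vertex, the Schwarz reflection principle together with the $\Gamma$-automorphy $s(\gamma(z))=s(z)$ then propagate this identical pole structure to the vertices of every reflected triangle filling {\rm D}.

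I expect the only genuinely delicate point to be the step establishing the pole locations: one must be sure that $s'(z)$ has no spurious zeros in the interior of any triangle and that $s$ attains the values $0$ and $1$ only at vertices, so that no unlisted singularities of $y$ appear. This rests entirely on the conformal bijectivity of $z(s)$ on each triangle and on its branching solely at $s=0,1,\infty$, both recalled in Section~\ref{section2.2}. Once the local exponents $1/\alpha$, $1/\beta$, $1/\gamma$ are in hand, the residue extraction is routine.
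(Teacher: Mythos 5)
Your proof is correct and follows essentially the same route as the paper: local power-law expansions of $s(z)$ at the vertices, extraction of the residues $3(\tf{\alpha_1}{\alpha}-1)$, $3(\tf{\beta_1}{\beta}-1)$, $3(\tf{\gamma_1}{\gamma}-1)$, the observation that all three cannot vanish simultaneously since $\alpha_1+\beta_1+\gamma_1=1$ while $\alpha+\beta+\gamma<1$, and propagation to every triangle in D via the Schwarz reflection principle and $\Gamma$-automorphy. The only difference is that you carry out explicitly (and correctly) the residue computation that the paper compresses into ``a straightforward calculation,'' including the justification that $s'$ has no spurious zeros and that $s\in\{0,1,\infty\}$ only at vertices.
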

Hence, there are three distinct cases resulting from Proposition~\ref{mero}, namely, $y(z)$ has a simple pole at
(1)\, only one vertex $z_0 \in \{z(0),z(1),z(\infty)\}$,
(2)\, only two of the three vertices, and (3)\, all three vertices.
The admissible parameters that satisfy \eqref{u}
can be determined by considering each case separately.

{\bf Case 1.} Suppose $y(z)$ is analytic at $z(0)$ and $z(1)$, and
has a simple pole only at $z(\infty)$ on each triangle in D. Then the
vanishing of residues at $z(0)$ and $z(1)$ is equivalent to $\alpha=\alpha_1$
and $\beta=\beta_1$, which implies that $A=B=0$. Note however that $C\neq 0$
since $\operatorname{Res}_{z(\infty)} \neq 0$. In this case~\eqref{u1} and~\eqref{u5} are identically satisf\/ied, while~\eqref{u2} and~\eqref{u4} imply
that $(1-2\alpha_1)(1-3\alpha_1)=(1-2\beta_1)(1-3\beta_1)=0$.
Hence,
\begin{gather*}(\alpha, \beta) \in \left\{\big(\tf12,\tf12\big), \big(\tf12,\tf13\big), \big(\tf13,\tf12\big)
\big(\tf13, \tf13\big)\right\}.\end{gather*}
Since the subcase $\alpha=\beta=\tf12$ is not admissible due to
the condition $\alpha+\beta+\gamma<1$, and the subcases
$(\alpha, \beta)\in\{(\tf12,\tf13), (\tf13,\tf12)\}$
are the same modulo a vertex permutation, there are only two distinct subcases,
namely, $(\alpha, \beta)=(\tf12,\tf13)$ and $(\alpha, \beta)=(\tf13,\tf13)$.

If $(\alpha, \beta)=(\tf12,\tf13)$, the remaining condition \eqref{u3}
yields $JC=\tf13$. Choosing $\gamma=\tf1k$, $k\in \mathbb{N}$,
gives $J=12k^2/(k^2-36)$, which is consistent with the value of $K$
in~\eqref{gch}. Recall (from the def\/inition below~\eqref{etaeqn})
that $K=(12-J)/108$.

If $(\alpha, \beta)=(\tf13,\tf13)$, then \eqref{u3} implies that
$JC=\tf43$. In this case, choosing $\gamma=\tf1k$, $ k\in \mathbb{N}$
gives $J=12k^2/(k^2-9)$ so that $K=1/(9-k^2)$, which is dif\/ferent from
the value used by Chazy in~\eqref{gch}. However, the rescaling
$k \to \tf{k}{2}$ restores the value of $K$ in~\eqref{gch} while modifying
the value of $\gamma$ to $\gamma=\tf{2}{k}$.
If the simple pole of $y(z)$ is chosen to be at $z(0)$ or $z(1)$ instead,
the values of the parameters are simply permuted. Thus, modulo permutations,
there are two possible sets of admissible parameters:
\begin{alignat*}{3}
& (i) \quad && (\alpha, \beta, \gamma) = \big(\tf12, \tf13, \tf1k\big),
\qquad (\alpha_1, \beta_1, \gamma_1) = \big(\tf12, \tf13, \tf16\big), & \\
& (ii)\quad && (\alpha, \beta, \gamma) = \big(\tf13, \tf13, \tf2k\big),
\qquad (\alpha_1, \beta_1, \gamma_1) = \big(\tf13, \tf13, \tf13\big) ,
\end{alignat*}
for the same value of $K$ as in \eqref{gch}, and the rescaled
($k \to 2k$) version of subcase (ii) above.

{\bf Case 2.} Suppose now $y(z)$ is analytic at only one vertex $z(0)$ and
has a simple pole at each of $z(1)$ and $z(\infty)$. Then $\alpha=\alpha_1$,
which means $A=0$, so that \eqref{u5} is automatically satisf\/ied.
Here, $B \neq 0$ and $B+C \neq 0$ since the residues at $z(1)$ and
$z(\infty)$ do not vanish. Then \eqref{u4} leads to three
subcases: (i) $\alpha_1=\tf12$, (ii) $\alpha_1=\tf13$, and
(iii) $C=0$.

In the case where $\alpha_1=\half$, one solves f\/irst for $\beta_1$, then for $B$
and $B+C$ from the remaining equations in~\eqref{u}. The admissible
values of the parameters up to a permutation, are $\beta_1=\tf16$, and
$(\alpha, \beta, \gamma) = (\tf12, \tf1k, \tf2k)$, $k\in \mathbb{N}$
with the value of $K$ as in~\eqref{gch}. However, one also obtains
$(\alpha, \beta, \gamma) = (\tf12, \tf{1}{2k}, \tf1k)$ with $K=1/(9-k^2)$
as in Case 1 above, which can be rescaled via $k \to \tf{k}{2}$ to
Chazy's choice for $K$.

If $\alpha_1=\tf13$, a similar analysis as above yields $\beta_1=\tf16$,
and $(\alpha, \beta, \gamma) = (\tf13, \tf1k, \tf3k)$, $k \in \mathbb{N}$
with $K$ as in \eqref{gch}. There is also a second case with
$(\alpha, \beta, \gamma) = (\tf13, \tf{1}{3k}, \tf1k)$ and $K=4/(36-9k^2)$,
which can be rescaled back to the previous case via $k \to \tf{k}{3}$.

Finally, if $C=0$, \eqref{u3} reduces to $(2-3\alpha_1)(1-\alpha_1)B=0$. Therefore, $\alpha_1=\tf23$ since $\alpha_1 = \alpha \neq 1$ and $B\neq 0$.
The remaining equations in~\eqref{u} yield $\beta_1=\tf16$, $\beta=\gamma=\tf1k$, $k\in \mathbb{N}$ with $K$ as in~\eqref{gch}.

{\bf Case 3.} If $y(z)$ has a simple pole at each of the vertices $z(0)$, $z(1)$
and $z(\infty)$, then $A \neq 0$, $B \neq 0$, and $A+B+C \neq 0$. The equations~\eqref{u1} and~\eqref{u5} imply, respectively, that $JB=12\beta_1^2$ and $JA=12\alpha_1^2$. Substituting these into~\eqref{u2} and~\eqref{u4}, one obtains
\begin{gather*}
(1-6\beta_1)\big[2(1-\alpha_1)B+(1+\beta_1)C\big]=0 , \\
(1-6\alpha_1)\big[2(1-\beta_1)A+(1+\alpha_1)C\big]=0 .
\end{gather*}
The above equations lead to four distinct subcases:
(i)~$\alpha_1=\beta_1=\tf16$, (ii)~$\alpha_1=\tf16$, $\beta_1 \neq \tf16$,
(iii)~$\alpha_1 \neq \tf16$, $\beta_1=\tf16$, (iv) $\alpha_1 \neq \tf16,
\beta_1 \neq \tf16$. From the remaining condition~\eqref{u3} and
the above equations one f\/inds that the f\/irst three subcases yield, modulo
permutations, only one distinct set of admissible parameter values given by
\begin{gather*}(\alpha, \beta, \gamma)= \big(\tf1k, \tf1k, \tf4k\big), \qquad k\in \mathbb{N},
\qquad (\alpha_1, \beta_1, \gamma_1)= \big(\tf16, \tf16, \tf23\big) ,\end{gather*}
with $K$ as in \eqref{gch}. Like the previous two cases, an additional
set $(\alpha,\beta,\gamma)= (\tf{1}{4k}, \tf{1}{4k}, \tf{1}{k})$
with $K=1/(9-4k^2)$ is also found by f\/irst choosing $\gamma=\tf1k$ and
then solving for $\alpha$, $\beta$. This case can be reduced to the
one displayed above by the rescaling $k \to \tf k4$.

\looseness=1 Subcase (iv) leads to $\alpha_1=\beta_1$ and $\alpha=\beta$. Using
condition~\eqref{u3}, one obtains the equilateral triangle
corresponding to $\alpha=\beta=\gamma=\tf2k$, $k\in \mathbb{N}$
and $\alpha_1=\beta_1=\gamma_1=\tf13$ with $K$ as in~\eqref{gch}.
It is also possible to choose $\alpha=\beta=\gamma=\tf1k$ for the
same values of $\alpha_1$ and $\beta_1$ but with $K=1/(9-k^2)$. These
are related by the rescaling $k \to \tf k2$.
\begin{table}[h]
\centering
\begin{tabular}{|c|c|c|c|c|} \hline
Case & $(\alpha,\beta, \gamma)$ & $(\alpha_1, \beta_1)$ & $K$ &
$(\operatorname{Res}_{z(0)}, \operatorname{Res}_{z(1)},\operatorname{Res}_{z(\infty)})$\tsep{2pt}\bsep{2pt}\\ \hline
1(a) & $(\tf12,\tf13,\tf1k)$ & $(\tf12,\tf13)$ & $\tf{4}{36-k^2}$ &
$(0,0,\tf{k-6}{2})$ \tsep{2pt}\bsep{2pt}\\ \hline
1(b) & $(\tf13,\tf13,\tf2k)$ & $(\tf13,\tf13)$ & $\tf{4}{36-k^2}$ &
$(0,0,\tf{k-6}{2})$ \tsep{2pt}\bsep{2pt}\\ \hline
1(b)$^*$ & $(\tf13,\tf13,\tf1k)$ & $(\tf13,\tf13)$ & $\tf{1}{9-k^2}$ &
$(0,0,k-3)$ \tsep{2pt}\bsep{2pt}\\ \hline
2(a) & $(\tf12,\tf1k,\tf2k)$ & $(\tf12,\tf16)$ & $\tf{4}{36-k^2}$ &
$(0,\tf{k-6}{2},\tf{k-6}{2})$ \tsep{2pt}\bsep{2pt}\\ \hline
2(a)$^*$ &$(\tf12,\tf{1}{2k},\tf1k)$ & $(\tf12,\tf16)$ & $\tf{1}{9-k^2}$ &
$(0,k-3,k-3)$ \tsep{2pt}\bsep{2pt}\\ \hline
2(b) & $(\tf13,\tf1k,\tf3k)$ & $(\tf13,\tf16)$ & $\tf{4}{36-k^2}$ &
$(0,\tf{k-6}{2},\tf{k-6}{2})$ \tsep{2pt}\bsep{2pt}\\ \hline
2(b)$^*$ & $(\tf13,\tf{1}{3k},\tf1k)$ & $(\tf13,\tf16)$ & $\tf{4}{9(4-k^2)}$ &
$(0,\tf{3k-6}{2},\tf{3k-6}{2})$ \tsep{2pt}\bsep{2pt}\\ \hline
2(c) & $(\tf23,\tf1k,\tf1k)$ & $(\tf23,\tf16)$ & $\tf{4}{36-k^2}$ &
$(0,\tf{k-6}{2},\tf{k-6}{2})$ \tsep{2pt}\bsep{2pt} \\ \hline
3(a) & $(\tf1k, \tf1k,\tf4k)$ & $(\tf16,\tf16)$ & $\tf{4}{36-k^2}$ &
$(\tf{k-6}{2},\tf{k-6}{2},\tf{k-6}{2})$ \tsep{2pt}\bsep{2pt}\\ \hline
3(a)$^*$ & $(\tf{1}{4k}, \tf{1}{4k},\tf1k)$ & $(\tf16,\tf16)$ & $\tf{1}{9-4k^2}$ &
$(2k-3,2k-3,2k-3)$ \tsep{2pt}\bsep{2pt}\\ \hline
3(b) & $(\tf2k,\tf2k,\tf2k)$ & $(\tf13, \tf13)$ & $\tf{4}{36-k^2}$ &
$(\tf{k-6}{2},\tf{k-6}{2},\tf{k-6}{2})$ \tsep{2pt}\bsep{2pt}\\ \hline
(3b)$^*$ & $(\tf1k,\tf1k,\tf1k)$ & $(\tf13,\tf13)$ & $\tf{1}{9-k^2}$ &
$(k-3,k-3,k-3)$ \tsep{2pt}\bsep{2pt}\\ \hline
\end{tabular}
\caption{Parameters of triangle functions associated with the Chazy XII solution.}\label{t:chazy}
\end{table}
The results of our classif\/ication of the Chazy XII solution in terms of the
Schwarz triangle functions $S(\alpha,\beta,\gamma;z)$ are summarized
in Table~\ref{t:chazy}. The parameters $(\alpha,\beta,\gamma)$ of the
triangle function are listed in the second column, while the parameters
$(\alpha_1,\beta_1)$ in the third column determine the corresponding
solution $y(z)$ given by~\eqref{ypara}. This solution has a simple pole
at one or more of the vertices~$z(0)$,~$z(1)$,~$z(\infty)$ as stated in
Proposition~\ref{mero}. The residue at each simple pole is listed in the
last column. The values of the Chazy parameter $K$ are given in the
fourth column. The parameter values listed in each row are modulo all
possible permutations of the vertices, which
also permutes the triples $(\alpha,\beta,\gamma)$ and
$(\alpha_1,\beta_1,\gamma_1)$ accordingly, but $K$ is the same for all
permutations. Except for the Case~1(a) which was known to
Chazy~\cite{Chazy3} and Cases~1(b) and~3(b) which were found
in~\cite{Ablowitz-H2} using a dif\/ferent method, the remaining cases are
new to the best of our knowledge. Note that
in some cases, the second column of Table~\ref{t:chazy}
contains a parameter other than the reciprocal of a positive integer.
The corresponding Schwarz function is then not single-valued
in its domain of def\/inition but the Chazy XII solution~$y(z)$
given by~\eqref{ypara} still remains single-valued. Such multi-valued
Schwarz functions are related to a single-valued Schwarz function
via a rational transformation.
For example, $S(\tf23,\tf1k, \tf1k;z)$ in Case~2(c) is related to
the single-valued function $S(\tf12,\tf13, \tf1k;z)$ in Case~1(a)
via a degree-2 rational transformation~\cite{Nehari}
\begin{gather*} S\big(\tf12,\tf13, \tf1k; \epsilon z\big) =
\frac{\left[S(\tf23,\tf1k,\tf1k; z)-2\right]^2}
{4\left[1-S(\tf23, \tf1k,\tf1k; z)\right]} , \qquad
\epsilon = \sqrt[3]{-\tf14} . \end{gather*}
In this case
the associated triangle in the $z$-plane with interior angles
$\{\tf{2\pi}{3}, \tf{\pi}{k}, \tf{\pi}{k}\}$ is divided along
the bisector of the angle $\tf{2\pi}{3}$ into two triangles each with
interior angles $\{\tf{\pi}{2}, \tf{\pi}{3}, \tf{\pi}{k}\}$.
We shall discuss such rational transformations among the Schwarz
triangle functions in Section~\ref{section4.2}, which correspond to decomposition
of a curvilinear triangle into two or more similar sub-triangles. Each of the cases
marked with an asterisk corresponds to a single valued triangle function
but the parameter~$K$ is dif\/ferent from that considered by Chazy.
Each such case can be transformed to one in the immediately
preceding row by a rescaling $k \to \tf{k}{m}$ for $m=2,3$ or~$4$.

It is useful to note\footnote{We thank a referee for this observation.} that if
one rescales the Chazy function by
introducing $Y(z) = y(z)/6$ then \eqref{gch} takes the form
\begin{gather*} Y'''- 12YY''+3Y'\,^2 = K'\big(Y'-Y^2\big)^2, \qquad K'=0 \qquad \mbox{or} \qquad
K'=\frac{864}{36-k^2}.\end{gather*}
Then using \eqref{ydef} and \eqref{convex}, $Y(z)$ can be expressed
as a convex linear combination of the gDH variables as follows
\begin{gather*}Y(z) = \beta_1\omega_1+\alpha_1\omega_2+\gamma_1\omega_3 , \qquad
\alpha_1+\beta_1+\gamma_1=1 ,\end{gather*}
where the possible values of $(\alpha_1,\beta_1)$ are listed in Table~\ref{t:chazy}.

\subsection[Chazy XII and $3 \times 3$ systems]{Chazy XII and $\boldsymbol{3 \times 3}$ systems}\label{section3.3}
We now make a couple of observations regarding the equivalence
between the Chazy XII equation and systems of f\/irst order equations.
The f\/irst is the relationship between the Chazy~XII solution~$y(z)$ and the gDH variables $w_i$ given by~\eqref{ydef}.
Note that for each case in Table~\ref{t:chazy},
there is a~gDH system~\eqref{gdh}
def\/ined by the triple $(\alpha, \beta, \gamma)$. Then from \eqref{ydef}
with coef\/f\/icients given by $a_1 = 6\beta_1$, $a_2=6\alpha_1$ and
$a_3=6\gamma_1$, this gDH system can be reduced to the Chazy XII
equation~\eqref{gch} with the corresponding Chazy parameter $K$ listed
in Table~\ref{t:chazy}. It should be evident from Table~\ref{t:chazy} that
there are several gDH systems corresponding to the same Chazy~XII
equation. This fact is illustrated in Table~\ref{t:gdh}, where we list
the combinations of the gDH variables $w_i$ which give the same~$y(z)$
satisfying the Chazy XII equation with parameter $K=\tf{4}{36-k^2}$.
However, in each case the $w_i$ satisfy the gDH system~\eqref{gdh}
with a dif\/ferent function $\tau^2$ parameterized by the triple
$(\alpha,\beta,\gamma)$ listed in the corresponding row of Table~\ref{t:gdh}.
\begin{table}[h]
\centering
\begin{tabular}{|c|c|c|c|c|} \hline
Case & $(\alpha,\beta, \gamma)$ & $(\alpha_1, \beta_1)$
& $y=a_1w_1+a_2w_2+a_3w_3$ &
$\phi(z)=s'(z)/[s^{1-\alpha_1}(s-1)^{1-\beta_1}]$
\tsep{2pt}\bsep{2pt}\\ \hline
1(a) & $(\tf12,\tf13,\tf1k)$ & $(\tf12,\tf13)$ &
$2w_1+3w_2+w_3$ & $\phi=s'/[s^{\frac 12}(s-1)^{\frac 23}]$ \tsep{2pt}\bsep{2pt}\\ \hline
1(b) & $(\tf13,\tf13,\tf2k)$ & $(\tf13,\tf13)$ &
$2w_1+2w_2+2w_3$ & $\phi=s'/[s^{\frac 23}(s-1)^{\frac 23}]$ \tsep{2pt}\bsep{2pt}\\ \hline
2(a) & $(\tf12,\tf1k,\tf2k)$ & $(\tf12,\tf16)$ &
$w_1+3w_2+2w_3$ & $\phi=s'/[s^{\frac 12}(s-1)^{\frac 56}]$ \tsep{2pt}\bsep{2pt}\\ \hline
2(b) & $(\tf13,\tf1k,\tf3k)$ & $(\tf13,\tf16)$ &
$w_1+2w_2+3w_3$ & $\phi=s'/[s^{\frac 23}(s-1)^{\frac 56}]$ \tsep{2pt}\bsep{2pt}\\ \hline
2(c) & $(\tf23,\tf1k,\tf1k)$ & $(\tf23,\tf16)$ &
$w_1+4w_2+w_3$ & $\phi=s'/[s^{\frac 13}(s-1)^{\frac 56}]$ \tsep{2pt}\bsep{2pt}\\ \hline
3(a) & $(\tf1k, \tf1k,\tf4k)$ & $(\tf16,\tf16)$ &
$w_1+w_2+4w_3$ & $\phi=s'/[s^{\frac 56}(s-1)^{\frac 56}]$ \tsep{2pt}\bsep{2pt}\\ \hline
3(b) & $(\tf2k,\tf2k,\tf2k)$ & $(\tf13, \tf13)$ &
$2w_1+2w_2+2w_3$ & $\phi=s'/[s^{\frac 23}(s-1)^{\frac 23}]$\tsep{2pt}\bsep{2pt}\\ \hline
\end{tabular}

\caption{Distinct gDH systems associated with Chazy XII equation with $K=\tf{4}{36-k^2}$.}\label{t:gdh}
\end{table}

The last column of Table~\ref{t:gdh} lists the function $\phi(z)$ whose
logarithmic derivative in \eqref{ypara} gives the solution~$y(z)$ for the Chazy XII equation. Since $\phi(z)$ are expressed in terms
triangle functions $S(\alpha,\beta,\gamma;z)$ and their derivatives,
it is possible for the same solution $y(z)$ to be parametrized by
two dif\/ferent triangle functions $s_1(z)$ and $s_2(z)$. For instance,
Cases~1(a) and~1(b) correspond to $s_1(z)=S(\half,\tf13,\tf1k; z)$
and $s_2(z)=S(\tf13,\tf13,\tf2k; z)$. Then it follows from~\eqref{ypara}
that the correspon\-ding~$\phi(z)$ must be proportional. This leads to a dif\/ferential relation between the two triangle functions, namely,
\begin{gather*} \frac{s_1'(z)}{s_1^{1/2}(s_1-1)^{2/3}} = C \frac{s_2'(z)}{s_2^{2/3}(s_2-1)^{2/3}} , \end{gather*}
for some constant $C$, and can be solved to yield
$s_1(z) = (2s_2(\epsilon z)-1)^2$, $\epsilon=C4^{-\frac 13}$.
In fact, all the triangle functions listed in Table~\ref{t:gdh} are related via
rational transformations, which will be deduced in the next section
by employing certain well-known transformations between solutions
of hypergeometric equations.

Lastly, we introduce a Ramanujan-type system that is equivalent
to the Chazy XII equation. Recall
from Section~\ref{section1} that the Ramanujan system \eqref{dPQR} is equivalent
to the Chazy III equation given by \eqref{gch} with $K=0$ if one
identif\/ies $y(z)=\pi i P(q)$. It will be shown that both \eqref{dPQR} and the
dif\/ferential system \eqref{dhPQR} introduced below have a simple geometric
interpretation. Using the notations in Section~\ref{section2.1}, let us def\/ine
\begin{gather}
\hat{P}(z) := \frac{3}{\pi i}\eta(z), \qquad
\hat{Q}(z) := \frac{18}{\pi^2}\Omega(z),
\qquad \hat{R}(z) := \frac{27i}{\pi^3}\nabla \Omega(z) .
\label{hPQR}
\end{gather}
Then, if $\eta(z)$ satisf\/ies \eqref{etaeqn},
the triple $(\hat{P}, \hat{Q}, \hat{R})$ satisf\/ies the dif\/ferential system
\begin{gather}
\frac{1}{2 \pi i} \hat{P}'(z) =\frac{\hat{P}^2-\hat{Q}}{12}, \qquad
\frac{1}{2 \pi i}\hat{Q}'(z)=\frac{\hat{P}\hat{Q}-\hat{R}}{3}, \qquad
\frac{1}{2 \pi i} \hat{R}'(z) =\frac{\hat{P}\hat{R}}{2}-\frac{J}{24}\hat{Q}^2.
\label{dhPQR}
\end{gather}
The f\/irst two equations in \eqref{dhPQR} are simply the def\/initions
of the quadratic dif\/ferential $\Omega=\eta'-\eta^2/2$ and its covariant
derivative $\nabla \Omega = \Omega' - 2\eta \Omega$ associated with the
af\/f\/ine connection~$\eta(z)$. The third one is
equation \eqref{Qeqn} which is equivalent
to the Chazy XII equation with $J=12-108K$, $K$ being
the Chazy parameter. When $J=12$, \eqref{dhPQR} reduces to the Ramanujan
system \eqref{dPQR} which is equivalent to the Chazy~III equation.
More explicitly, if $y(z)$ is a solution of~\eqref{gch}
then
\begin{gather*} \hat{P}= \frac{y}{\pi i}, \qquad \hat{Q} = - \frac{6y'-y^2}{(\pi i)^2} ,
\qquad \hat{R} = \frac{9y''-9yy'+y^3}{(\pi i)^3} \end{gather*}
satisfy \eqref{dhPQR} which subsumes the Ramanujan system \eqref{dPQR}.
The Ramanujan triple $(P, Q, R)$ has a {\it modular} interpretation since
it is related to the Eisenstein series associated with the modular
group ${\rm SL}_2(\mathbb{Z})$. The functions $\hat{P}$, $\hat{Q}$, $\hat{R}$ are also automorphic
forms for the triangle group $\Gamma$ and are parameterized by the triangle
functions $S(\alpha, \beta, \gamma; z)$ via~\eqref{ypara}. In fact
they transform as follows:
\begin{gather*}
\hat{P}(\gamma(z)) = (cz+d)^2 \hat{P}(z) + \frac{6c(cz+d)}{\pi i} ,
\quad \qquad \hat{Q}(\gamma(z)) = (cz+d)^4\hat{Q}(z) , \\
\hat{R}(\gamma(z)) = (cz+d)^6\hat{R}(z) , \qquad
\text{where} \qquad \gamma(z) = \frac{az+b}{cz+d}, \qquad
\begin{pmatrix} a & b \\ c & d \end{pmatrix} \in \Gamma .
\end{gather*}
However, we are not aware of any deep automorphic interpretation
for $(\hat{P}, \hat{Q}, \hat{R})$
similar to that of the Ramanujan triple.
Ramanujan~\cite{Ramanujan-notebook} also gave
a parametrization of his triple using the hypergeometric
function $_2F_1(\half,\half,1; s)$ that is related to the
complete elliptic integral of the f\/irst kind. In the following section,
we discuss the parametrizations of the Chazy XII solution $y(z)$ as
well as $(\hat{P}, \hat{Q}, \hat{R})$ in terms of
hypergeometric functions.

\section{Parameterization of Chazy XII solutions}\label{section4}
Explicit solutions of the Chazy equation were
presented in terms of the triangle functions listed in Table~\ref{t:chazy}
of Section~\ref{section3}. Recall that the triangle functions satisfy the
nonlinear third order equation given by \eqref{schwarz-b}
with the potential $V(s)$ as in \eqref{V}. Equation \eqref{schwarz-b}
is linearized via solutions of the Fuchsian equation \eqref{ueqn} associated
with~$V(s)$ using \eqref{zdef}.
Consequently, it is more convenient to express the Chazy solution $y(s(z))$
{\it implicitly}, that is, in terms of the variable~$s$ and a solution~$u(s)$
of the linear equation~\eqref{ueqn}. Thus it is possible to treat the
solutions of the nonlinear Chazy~XII equation in terms of the classical
theory of linear Fuchsian dif\/ferential equations with three
regular singular points,
equivalently, via the hypergeometric equation. This is the main purpose
of the present section.

\subsection{Hypergeometric parametrization}\label{section4.1}
In the following the domain D of the triangle functions $s(z)$ will be taken
as the interior of the orthogonal circle C discussed in Section~\ref{section2.2},
and the hypergeometric form of
the Fuchsian dif\/ferential equation \eqref{ueqn} will be considered, in order
to make contact with standard literature.
If $u(s)$ is a solution of \eqref{ueqn}, then the function
\begin{gather}
 \chi(s) = s^{(\alpha-1)/2}(s-1)^{(\beta-1)/2}u(s)
\label{chi}
\end{gather}
satisf\/ies the hypergeometric equation
\begin{subequations}
\begin{gather}
\chi''+\left(\frac{1-\alpha}{s}+\frac{1-\beta}{s-1}\right)\chi'+
\frac{(\alpha+\beta-1)^2-\gamma^2}{4s(s-1)}\chi=0 ,\label{hyper-1}
\end{gather}
which can be written in more standard form as
\begin{gather}
s(s-1)\chi''+[(a+b+1)s-c]\chi'+ab\chi=0,\label{hyper-2}
\end{gather}
\end{subequations}
where $a=\half(1-\alpha-\beta-\gamma)$, $b=\half(1-\alpha-\beta+\gamma)$,
and $c=1-\alpha$. The transformation \eqref{chi} sets the local exponents
to $(0, \alpha)$ at $s=0$, $(0, \beta)$ at $s=1$ and
$(\half(1-\alpha-\beta-\gamma), \half(1-\alpha-\beta+\gamma))$ at $s=\infty$.
Note, however, that the exponent {\it differences}
as well as the ratio $z(s)$ of any two linearly independent solutions
of~\eqref{hyper-2} coincide with those for~\eqref{zdef}. Consequently,
one can employ the ratio of two independent solutions of~\eqref{hyper-1} instead of~\eqref{ueqn} to construct the
conformal mapping and triangle function described in Section~\ref{section2.2}.

We next outline how to construct the triangle functions $s(z)$
listed in Table~\ref{t:chazy} together with their orthogonal circles using pairs
of linearly independent hypergeometric solutions (see, e.g.,~\cite{Nehari}).
Notice from \eqref{zdef} that $z(s)$ is def\/ined up to an arbitrary M\"obius
transformation with three complex parameters of the $z$-plane
depending on the choice of linearly independent solutions
$\chi_1$ and $\chi_2$ of~\eqref{hyper-2}.
One can then choose two of the three parameters in the M\"obius transformation
in such a manner that the conformal map~\eqref{zdef} results in a triangle which has the vertex~$z(0)$ placed
at the origin of the $z$-plane, and the two circular arcs meeting there
can be transformed to linear segments subtending angle $\pi \alpha$
at this vertex $z(0)$. The remaining freedom in the M\"obius transformation
can be used to rotate the line segment connecting the vertices
$z(0)$ and $z(1)$ onto the real axis. The remaining side
joining $z(1)$ and $z(\infty)$ is formed by the arc of a circle such that
the origin~O is in the exterior of this circle when the sum of the interior
angles of the triangle is less than $\pi$, i.e., when $\alpha+\beta+\gamma < 1$.
Hence, it is possible to draw a line from the origin tangent to this circle
at some point~F (see Fig.~\ref{fig:R}, Appendix~\ref{appendixB}). Then there exists
a unique circle C with center
at the origin~O and passing through the point~F, thus having radius~OF,
that is orthogonal to the straight edges of the triangle
thus constructed. Consequently, C is orthogonal to each side of the
triangle.

A pair of hypergeometric solutions whose ratio
maps the upper half $s$-plane onto a triangle constructed above is
given by $\chi_1 ={}_2F_1(a,b;c;s)$ and $\chi_2 = s^{1-c}\,{}_2F_1(a-c+1,b-c+1;2-c;s)$.
Here the notation of \eqref{hyper-2} has been used and ${}_2F_1(a,b;c;s)$ is
the standard hypergeometric series solution of \eqref{hyper-2}, analytic
in the neighborhood of $s=0$ with $_2F_1(a,b;c;s=0)=1$. Note that
$\chi_2$ vanishes at the branch point $s=0$ if $\alpha=1-c >0$.
The explicit form of the map is then
\begin{gather}
z(s) = \frac{\chi_2}{\chi_1}=\frac{s^{1-c}\,{}_2F_1(a-c+1,b-c+1;2-c;s)}{_2F_1(a,b;c;s)} .
\label{map}
\end{gather}
Since the triples $(\alpha, \beta, \gamma)$
listed in Table~\ref{t:chazy} satisfy \eqref{cond}, $\alpha >0$.
Hence, it is clear that $z(0)=0$.
Furthermore, the parameters $(a,b,c)$ are real
and positive. Consequently, the $_2F_1$ functions are also real and positive
for $\operatorname{Re}(s)>0$. Moreover, if we take the real, positive branch
of $s^{1-c}$ when $\operatorname{Re}(s)>0$, then it follows from~\eqref{map} that~$z(s)$ is also real and positive as $s$ varies from~$0$ to~$1$ along
the real $s$-axis. This conf\/irms that one side of the triangle lies
on the positive $z$-axis joining the vertices $z(0)=0$ and $z(1)$.
The other side of the triangle originating from
the vertex $z(0)=0$ is the conformal image of the negative real $s$-axis
on which the $_2F_1$ functions are also real but the factor
$s^{1-c}=(-|s|)^{1-c} =|s|e^{i\pi\alpha}$. This shows that the negative
real $s$-axis is mapped to a~linear segment joining $z(0)$ and $z(\infty)$,
and making an angle~$\pi \alpha$ with the positive $z$-axis.
One can also compute the vertices~$z(1)$ and~$z(\infty)$ by considering
the analytic continuations of the~$_2F_1$ functions into the neighborhoods
of $s=1$ and $s=\infty$. These are given by (see, e.g.,~\cite{Bateman, Nehari})
\begin{gather*}z(1)=\frac{\Gamma(2-c)\Gamma(c-a)\Gamma(c-b)}{\Gamma(c)\Gamma(1-a)\Gamma(1-b)}, \qquad
z(\infty) = e^{\pi i (1-c)}
\frac{\Gamma(b)\Gamma(c-a)\Gamma(2-c)}{\Gamma(c)\Gamma(b-c+1)\Gamma(1-a)} ,\end{gather*}
where $\Gamma(\cdot)$ is the Gamma function. From the expressions for $z(1)$
and $z(\infty)$, it is possible to determine the radius~$R$ of the
orthogonal circle~C which forms the natural barrier beyond which~$s(z)$
can not be analytically continued. In terms of the triple $(\alpha, \beta, \gamma)$
parameterizing the triangle function $S(\alpha,\beta,\gamma;z)$ the square of the
radius of the barrier is~\cite{Cosgrove}
\begin{gather*} R^2 = \left(\frac{\Gamma(1+\alpha)}{\Gamma(1-\alpha)}\right)^2
\prod_{\epsilon_i=\pm 1}\frac{\Gamma(\half(1-\alpha+\epsilon_1\beta+\epsilon_2\gamma))}
{\Gamma(\half(1+\alpha+\epsilon_1\beta+\epsilon_2\gamma))} . \end{gather*}
The above expression appears in~\cite{Chazy3, Cosgrove} but without a derivation,
which, although elementary, is not immediately obvious. For that reason, we
have included a brief derivation in Appendix~\ref{appendixB}.

The map $z(s)$ in \eqref{map} is a Puiseux series in $s$ of the
form $z(s) = s^{\alpha}\psi(s)$ where $\psi(s)$ is analytic near $s=0$
with $\psi(0) \neq 0$. In fact, the power series for~$\psi(s)$ can
be readily derived from the series expansion of the~$_2F_1$ functions
in \eqref{map}. Since $\alpha = \tf1n$, $n \geq 2$, $n \in \mathbb{N}$,
the series for $z(s)$ can be inverted to obtain the power series
of the inverse $s(z)$ in the form
\begin{gather*} s(z)= z^n\big(1+b_1z^n+b_2z^{2n}+\cdots\big) ,\end{gather*}
where the coef\/f\/icients $b_j$ can be obtained recursively from the
coef\/f\/icients in the series expansion of $\psi(s)$. The series for~$s(z)$
converges in a neighborhood of $z(0)=0$ and def\/ines a single-valued, holomorphic
function in this neighborhood as discussed earlier in Section~\ref{section3.2}.
By analytic continuation of the
hypergeometric functions onto the neighborhoods of $s=1$ and $s=\infty$,
it is possible to obtain similar series expansions for $s(z)$ near
$z(1)$ and $z(\infty)$ as well. Note that $s(z)$ has a pole of order
$\tf1\gamma$ at $z(\infty)$.

Let $(\chi_1, \chi_2)$ be the pair of hypergeometric solutions
whose ratio def\/ines $z(s)$ as in \eqref{map}, then
$s'(z) = 1/z'(s)=\chi_1^2/W(\chi_1, \chi_2)$
where the Wronskian of the pair of solutions
$W(\chi_1, \chi_2) = As^{\alpha-1}(s-1)^{\beta-1}$
from Abel's formula. The nonzero constant $A$ is found by explicitly
calcu\-la\-ting the Wronskian of $(\chi_1, \chi_2)$ in
\eqref{map} and letting $s \to 0$. Thus one obtains $A = (-1)^{1-\beta}\alpha$.
Substituting the expression for $s'(z)$ in~\eqref{ypara} yields
a parametrization for $y(z)$ in terms of~$\chi_1(s)$ and~$\chi_1'(s)$, namely
\begin{gather}
y(s(z)) = \frac{3}{\alpha} s^{1-\alpha}(1-s)^{1-\beta}\left(2\chi_1\chi_1'+\left[\frac{\alpha_1-\alpha}{s}+\frac{\beta_1-\beta}{s-1}\right]
\chi_1^2\right) .\label{yimp}
\end{gather}
To be clear $y(z)$ is expressed parametrically by $z(s)$ in \eqref{map} and $y(s)$ given by \eqref{yimp}.

Note that when $\alpha=\alpha_1$, $\beta=\beta_1$, $y(s(z))$ is analytic
at the vertices $z=z(0), z(1)$ since $\chi_1(s)$ is analytic
at $s=0, 1$. On the other hand, if $\alpha=\alpha_1$ but $\beta \neq \beta_1$
then $y(s(z))$ is analytic at~$z(0)$ but has a simple pole at $z=z(1)$. These
observations are consistent with Cases~1 and~2 in Section~\ref{section3.2}.

The Ramanujan-type triple $(\hat{P}, \hat{Q}, \hat{R})$ of automorphic
functions introduced in Section~\ref{section3.3} can also be formulated in terms of
the hypergeometric function $\chi_1(s) ={}_2F_1(a,b;c;s)$.
The function $\hat{P}=y/(\pi i)$ is obtained directly from~\eqref{yimp} above.
In order to obtain expressions for $\hat{Q}$, $\hat{R}$, one f\/irst recalls
from Section~\ref{section3.1} that the quadratic dif\/ferential $\Omega$ and its
covariant derivatives are given in terms of $s(z)$ and $s'(z)$ by
\begin{gather*}\Omega(s(z)) = -V_2(s(z))s'(z)^2 , \qquad \nabla\Omega(s(z)) = -V_3(s(z))s'(z)^3 ,\end{gather*}
where the rational functions $V_2(s)$ and $V_3(s)$ are obtained from~\eqref{Vk}.
Then from \eqref{hPQR} together with the relation $s'(z) =\chi_1^2/W(s)$ it follows that
\begin{gather}
\hat{Q} = -2\left(\frac{3}{\pi W(s)}\right)^2V_2(s)\chi_1^4(s) , \qquad
\hat{R} = \left(\frac{3i}{\pi W(s)}\right)^3V_3(s)\chi_1^6(s),\label{QRimp}
\end{gather}
where $W(s)=\alpha s^{\alpha-1}(1-s)^{\beta-1}$.
The explicit form of the function $V_2(s)$
is given in \eqref{V2}, and from \eqref{Vk} one obtains
$V_3(s)=V_2'(s)+\big(\tf{1-\alpha_1}{s}+\tf{1-\beta_1}{s-1}\big)V_2(s)$. Alternatively,
one could use \eqref{yimp} and the f\/irst two equations from~\eqref{dhPQR} to derive~\eqref{QRimp}.

Equations \eqref{yimp} and \eqref{QRimp} provide parametrizations
of the triple $(\hat{P}, \hat{Q}, \hat{R})$, equivalently the dif\/ferential
geometric quantities $(\eta, \Omega, \nabla \Omega)$ in terms of~$s$
and the hypergeometric function~$\chi_1(s)$. Conversely,
it is also possible to derive expressions
for~$S(\alpha,\beta,\gamma;z)$ and $\chi_1$
in terms of the functions $\hat{P}$, $\hat{Q}$ and~$\hat{R}$.
For instance, Case~1(a) in Table~\ref{t:chazy} gives rise to the following relations
\begin{gather*} \left(\frac{J\pi^2}{12}\hat{Q}\right)^{\frac14} =
(1-s)^{\frac {1}{12}}\,{}_2F_1\big(\tf{1}{12}-\tf{1}{2k},\tf{1}{12}+\tf{1}{2k};\half; s\big),
\\
s(z)= S\big(\half,\tf13;\tf1k; z\big) = \frac{12 \hat{R}^2}{12 \hat{R}^2-J\hat{Q}^3}. \end{gather*}
Note that if $k \to \infty$ (Chazy~III case with $J=12$),
one recovers the well known representation of the modular function
$S(\half,\tf13;0; z)$ from above in terms
of the Ramanujan functions $Q$ and $R$, which are the Eisenstein series
of weight 4 and 6 respectively, for the modular group. Moreover, the
f\/irst relation above leads to (a slight variant of) a remarkable
identity discovered by Ramanujan~\cite{Ramanujan-collect}
\begin{gather*}\sqrt{\pi}Q^{\frac 14} = {}_2F_1\big(\tf{1}{12},\tf{5}{12};\half; \tf{R^2}{Q^3}\big) .\end{gather*}
This identity is obtained by f\/irst letting $k \to \infty$ and then applying
the Pfaf\/f transformation $(1-s)^{1/12}\,{}_2F_1(\tf{1}{12},\tf{1}{12};\half; s)
= {}_2F_1(\tf{1}{12},\tf{5}{12};\half; \tf{s}{s-1})$.

\subsection{Pull-back maps of Schwarz functions}\label{section4.2}
Recall that Table~\ref{t:gdh} of Section~\ref{section3.3} lists the parametrizations
of the Chazy XII solution $y(z)$ with Chazy parameter $\tf{4}{36-k^2}$
in terms of dif\/ferent Schwarz triangle functions.
It follows from \eqref{ypara} that any two of these Schwarz functions
are related via the dif\/ferential relation
$\phi_1(z) = C \phi_2(z)$ for some constant $C$, where the $\phi_i(z)$
are listed in the last column of Table~\ref{t:gdh}. In fact, the
example given below Table~\ref{t:gdh} provides such a mapping between the triangle
functions corresponding to Cases 1(a) and 1(b) of Table~\ref{t:gdh}. In this
subsection we systematically outline the transformations among the various
triangle functions linking all the cases presented in Table~\ref{t:gdh}.

The mappings of the Schwarz functions stem from the well-known algebraic
transformations of the hypergeometric functions induced by
the pull-back transformation of the corresponding hypergeometric dif\/ferential
equation \eqref{hyper-2}~\cite{Goursat}. These transformations are of the form
${}_2F_1(a,b;c ; s) = \xi(s)\,{}_2F_1(a',b'; c'; \theta(s))$, where
$\theta(s)$ is a rational function, and $\xi(s)$ is an algebraic
function (see, e.g.,~\cite{Bateman}, also~\cite{Vid09} for more recent
work). To make this paper self-contained,
we include here the details of the derivations of the relations between the
triangle functions listed in Table~\ref{t:gdh}. It suf\/f\/ices to derive the
transformations relating Case 1(a) to all other cases in Table~\ref{t:gdh}.
Let $\tilde{s} = \theta(s)$ denote the rational map where
$\tilde{s}=S(\half, \tf13, \tf1k;z)$ corresponds to Case 1(a)
and $s=S(\alpha, \beta, \gamma ;z)$ corresponds to any of the
other cases. Let $\tilde{s}=f_1(s_1)=f_2(s_2)$ denote
two such rational maps for Schwarz functions $s_1(z)$ and $s_2(z)$
which parametrize the same Chazy XII solution $y(z)$. Then the
transformation between $s_1$ and $s_2$ can be expressed as
$s_2 = (f_2^{-1} \circ f_1)(s_1)$, which may generally
not be single valued.

Below we consider each case from Table~\ref{t:gdh} separately,
treating Case (i) below as an illustrative example of the procedure.

{\bf Case (i).} The mapping between the Schwarz functions
$S(\half, \tf13, \tf1k; z)$ and $S(\tf13, \tf13, \tf2k; z)$
corresponding to Cases 1(a) and 1(b) of Table~\ref{t:gdh}
follows from the well-known quadratic transformation~\cite{Goursat}
\begin{gather*}{}_2F_1(a,b;c; s) = {}_2F_1\big(\tf{a}{2},\tf{b}{2}; c; 4s(1-s)\big),
\qquad c = \frac{a+b+1}{2}, \end{gather*}
of hypergeometric functions. Note that the parameters $(a,b,c)$ above
are constrained by a linear relation.
Choosing $a=\tf16 - \tf1k$, $b =\tf16 + \tf1k$
whence $c=\tf23$, yields $(\alpha, \beta, \gamma)=(1-c, c-a-b, b-a)
= (\tf13, \tf13, \tf2k)$ and $(\alpha, \beta, \gamma) =
(1-c, c-\half(a+b), \half(b-a)) = (\tf13, \tf12, \tf1k)$. These values
correspond to the exponent dif\/ferences at the singular points
$s=0,1,\infty$ (up to their permutations).
One applies the quadratic transformation to the
${}_2F_1$ functions in the right hand side of the map~\eqref{map}
to construct~$z(s)$ f\/irst, and then invert the obtained relation
to recover the map between the two triangle functions. Thus, in this case,
one obtains from \eqref{map}
\begin{gather*}z(s) = s^{\frac 13} \frac{_2F_1(\tf12-\tf1k,\tf12+\tf1k; \tf43; s)}
{_2F_1(\tf16 - \tf1k,\tf16 + \tf1k; \tf23; s)} = (s(1-s))^{\frac 13}
\frac{_2F_1(\tf56 - \tf1k,\tf56 + \tf1k; \tf43; s)}
{_2F_1(\tf16 - \tf1k,\tf16 + \tf1k; \tf23; s)} , \end{gather*}
where the Euler transformation
${}_2F_1(a,b;c; s)=(1-s)^{c-a-b}\,{}_2F_1(c-a,c-b;c; s)$ and the symmetry
${}_2F_1(a,b;c; s)={}_2F_1(b,a;c; s)$ have been used to obtain the
last equality. Note that the parameters in the ${}_2F_1$ functions
of this last expression satisfy the relation $c=(a+b+1)/2$.
Next, the quadratic transformation is applied
to both the ${}_2F_1$ functions on the numerator and denominator to yield
\begin{gather*}\tilde{z}(\tilde{s}):=\sqrt[3]{4}z(s) = \tilde{s}^{\frac 13}
\frac{_2F_1(\tf{5}{12} - \tf{1}{2k},\tf{5}{12} + \tf{1}{2k}; \tf43; \tilde{s})}
{_2F_1(\tf{1}{12} - \tf{1}{2k},\tf{1}{12} + \tf{1}{2k}; \tf23; \tilde{s})}
\qquad \tilde{s}:=4s(1-s) . \end{gather*}
Inverting the above relation leads to $\tilde{s} = 4s(1-s)$ where
$\tilde{s}(z) = S(\tf13, \tf12, \tf1k; \sqrt[3]{4}z)$ and
$s(z)= S(\tf13, \tf13, \tf2k; z)$. By making the substitution
$\tilde{s} \to 1-\tilde{s}$, which amounts to switching the singular
points $\tilde{s}=0$ and $\tilde{s}=1$, one obtains
\begin{gather*}\tilde{s}=\theta(s)=1-4s(1-s)=(2s-1)^2, \qquad
\tilde{s}(z)= S\big(\tf12, \tf13, \tf1k; \sqrt[3]{4}z\big),\end{gather*}
as the rational map $\theta(s)$ of degree 2 between the two triangle
functions listed in Cases 1(a) and 1(b) of Table~\ref{t:gdh}.
The associated pull-back transformation is denoted in terms of
the local exponents as
$(\tf12, \tf13, \tf1k) \mapto2 (\tf13, \tf13, \tf2k)$~\cite{Vid09}.

{\bf Case (ii).} Here, the transformation between the Schwarz function
for each of the three subcases in Case 2 of Table~\ref{t:gdh}
and the Schwarz function in Case 1(a) will
be discussed. The mapping between Cases 2(a) and 1(a) is obtained
from the cubic transformation of hypergeometric equation~\cite{Bateman, Vid09}
\begin{gather*} _2F_1\big(a,\tf{1-a}{3};\tf{4a+5}{6};s\big) =
(1-4s)^{-a}\,{}_2F_1\big(\tf{a}{3},\tf{a+1}{3};\tf{4a+5}{6}; \tilde{s}\big), \qquad
\tilde{s}=\frac{27s}{(4s-1)^3}, \end{gather*}
where the ${}_2F_1$ parameters $(a,b,c)$ depend only on $a$. Choosing
$a=\tf14-\tf{3}{2k}$, the above transformation gives the degree 3 map
$(\tf1k, \tf12, \tf13) \mapto3 (\tf1k, \tf12, \tf2k)$.
In this case, \eqref{map} leads to
\begin{gather*} z(s) = s^{\frac{1}{k}} \frac{_2F_1(\tf14+\tf{3}{2k},\tf14-\tf{1}{2k};1+\tf1k;s)}
{_2F_1(\tf14-\tf{3}{2k},\tf{1}{4}+\tf{1}{2k};1-\tf1k;s)} , \end{gather*}
which, after applying the cubic transformation of the hypergeometric
functions, leads to
\begin{gather*}\tilde{z}(\tilde{s}) := (-27)^{\frac 1k}z(s)=\tilde{s}^{\frac 1k}
\frac{_2F_1(\tf{1}{12}+\tf{1}{2k},\tf{5}{12}+\tf{1}{2k};1+\tf1k;\tilde{s})}
{_2F_1(\tf{1}{12}-\tf{1}{2k},\tf{5}{12}-\tf{1}{2k};1-\tf1k;\tilde{s})}.\end{gather*}
Inverting the above relation, one obtains the transformation
between the triangle functions
$\tilde{s}(z)=S(\tf1k,\tf12,\tf13;\tilde{z})=27s/(4s-1)^3$, $s(z)=S(\tf1k,\tf12,\tf2k;z)$. In order to obtain the degree 3 map
relating Case~2(a) to~1(a), one needs
to switch the singular points $s=0$ and $s=1$ so that $s \to 1-s$
in above, and also permute the singular points for $\tilde{s}$
as $(0,1,\infty) \to (\infty, 0, 1)$ by letting $\tilde{s} \to 1-\tilde{s}^{-1}$.
Thus, one obtains the degree 3 map
$(\tf12, \tf13, \tf1k) \mapto3 (\tf12, \tf1k, \tf2k)$ given by
\begin{gather*}\tilde{s}=S\big(\tf12, \tf13, \tf1k; \tilde{z}\big) = \frac{s(8s-9)^2}{27(1-s)}, \qquad
s(z) = S\big(\tf12, \tf1k, \tf2k;z\big) .\end{gather*}

The map between the Schwarz functions for Cases 2(b) and 1(a) of Table~\ref{t:gdh} is
obtained via a~quartic transformation of the ${}_2F_1$ functions~\cite{Vid09}
\begin{gather*}_2F_1(a,a+\tf13;\tf{3a+5}{6}; s) = (1+8s)^{-\frac{3a}{4}}\,{}_2F_1\big(\tf{a}{4},\tf{a}{4}+\tf13;\tf{3a+5}{6}; s\big), \qquad
\tilde{s}=\frac{64s(1-s)^3}{(1+8s)^3} . \end{gather*}
Choosing $a=\tf13-\tf2k$, the above transformation induces a degree 4 map
$(\tf1k, \tf12, \tf13) \mapto4  (\tf1k, \tf3k, \tf13)$. Then
\eqref{map} after applying the Euler transformation to the
${}_2F_1$ function in the denominator, yields the following
\begin{gather*} z(s) = \big[s(1-s)^3\big]^{\frac 1k} \frac{{}_2F_1(\tf13+\tf2k,\tf23+\tf2k;1+\tf1k;s)}
{_2F_1(\tf13-\tf2k, \tf23-\tf2k;1-\tf1k; s)} .\end{gather*}
Applying the quartic transformation to the ${}_2F_1$ functions in the
above expression, one obtains
\begin{gather*} \tilde{z}(\tilde{s}) = (64)^{\frac 1k}z(s) = \tilde{s}^{\frac 1k}
\frac{_2F_1(\tf{1}{12}+\tf{1}{2k},\tf{5}{12}+\tf{1}{2k};1+\tf1k;\tilde{s})}
{_2F_1(\tf{1}{12}-\tf{1}{2k}, \tf{5}{12}-\tf{1}{2k};1-\tf1k; \tilde{s})} .\end{gather*}
Then the substitutions $s \to (1-s)^{-1}$ and $\tilde{s} \to 1-\tilde{s}^{-1}$
yield the degree 4 map
$(\tf12, \tf13, \tf1k) \mapto4 (\tf13, \tf1k, \tf3k)$
between Cases 2(b) and 1(a), given by
\begin{gather*}\tilde{s}(z)=S\big(\tf12, \tf13, \tf1k; \tilde{z}\big) = \frac{(27s^2-36s+8)^2}{64(1-s)},
\qquad s(z) = S\big(\tf13, \tf1k, \tf3k;z\big) .\end{gather*}

The last subcase involving the mapping between Cases 2(c) and 1(a)
of Table~\ref{t:gdh} is derived once again from a quadratic transformation of
hypergeometric functions. It is a degree 2 map
$(\tf12,\tf13,\tf1k) \mapto2 (\tf23,\tf1k,\tf1k)$ given by
\begin{gather*}\tilde{s}=S\big(\tf12,\tf13,\tf1k;\tilde{z}\big) = \frac{(s-2)^2}{4(1-s)}, \qquad
s(z) = S\big(\tf23,\tf1k,\tf1k;z\big) , \qquad \tilde{z} =\big({-}\tf14\big)^{\frac 13} z .\end{gather*}
It is obtained by employing the quadratic transformation
discussed in Case (i) with hypergeometric parameters
$(a,b,c)=(\tf16-\tf1k,\tf56-\tf1k,1-\tf1k)$, and then making the substitutions
$s \to s^{-1}$, $\tilde{s} \to 1-\tilde{s}^{-1}$. Alternatively, one can
start from the quadratic transformation~\cite{Bateman}
\begin{gather*}_2F_1(a,b;2b;s) = (1-s)^{-\frac{a}{2}}
{}_2F_1\big(\tf{a}{2},b-\tf{a}{2};b+\tf12; \tilde{s}\big) , \qquad
\tilde{s}=\frac{s^2}{4(s-1)} , \end{gather*}
with parameters $(a,b,c)=(\tf16-\tf1k,\tf16,\tf13)$, then apply the
map $\tilde{s} \to 1-\tilde{s}$.

{\bf Case (iii).} There are two subcases for Case~3 in Table~\ref{t:gdh}.
For both of these subcases the pull-back
map to Case 1(a) of Table~\ref{t:gdh} is of degree 6, which can be expressed
as a composition of a degree 2 and a degree 3 map. Specif\/ically,
the map between cases 1(a) and 3(a) can be schematically represented as
\begin{gather*}\big(\tf12,\tf13,\tf1k\big) \mapto3 \big(\tf12,\tf1k,\tf2k\big)
\mapto2 \big(\tf1k,\tf1k,\tf4k\big) .\end{gather*}
The degree~2 map above is given by $\theta_1(s)=(2s-1)^2$ as in Case~(i) above.
The degree~3 map given by $\theta_2(s)= s(8s-9)^2/27(1-s)$, is the same as
the one between Cases~1(a) and~2(a) of Table~\ref{t:gdh} and
derived in Case~(ii). Thus, for this
case, $\tilde{s}=\theta_2 \circ \theta_1(s)$.

Similarly, Case 3(b) is mapped to Case 1(a) of Table~\ref{t:gdh} according to
\begin{gather*}\big(\tf12,\tf13,\tf1k\big) \mapto2 \big(\tf13,\tf13,\tf2k\big)
\mapto3 \big(\tf2k,\tf2k,\tf2k\big) .\end{gather*}
The degree 3 map transforming Case 3(b) to the intermediate step
(Case~1(b) of Table~\ref{t:gdh}) follows from a dif\/ferent cubic transformation
from the one in Case~(ii). It is given by~\cite{Vid09}
\begin{gather*} _2F_1\big(a,\tf{a+1}{3}; \tf{2a+2}{3};s\big)=
\big(1+\omega^2s\big)^{-a}\,{}_2F_1\big(\tf{a}{3}, \tf{a+1}{3};\tf{2a+2}{3};\tilde{s}\big),
\qquad \tilde{s}=A\frac{s(s-1)}{(s+\omega)^3} ,\end{gather*}
where $A=3(2\omega+1)$ and $\omega=e^{\frac {2\pi i}{3}}$.
Choosing $a=\half-\tf3k$ and proceeding as in the previous cases,
yields the map $(\tf2k,\tf13,\tf13) \mapto3 (\tf2k,\tf2k,\tf2k)$. Then after the substitution
$\tilde{s} \to \tilde{s}^{-1}$, one obtains
\begin{gather*}\tilde{s}(z)=S\big(\tf13,\tf13,\tf2k; (-A)^{\frac {2}{k}}z\big) =
\frac{(s+\omega)^3}{As(s-1)} := \theta_1(s), \qquad
s(z)=S\big(\tf2k,\tf2k,\tf2k;z\big) .\end{gather*}
The degree 2 map $\theta_2(s)\colon (\tf12,\tf13,\tf1k) \mapto2
 (\tf13,\tf13,\tf2k)$ was already discussed in Case~(i). Hence, the
composition map $\tilde{s}=\theta_2 \circ \theta_1(s)$ gives
the transformation between the Schwarz functions
in Case~3(b) and Case~1(a) of Table~\ref{t:gdh}.

It can be readily verif\/ied that in all the above cases, the rational
map $\tilde{s}=\theta(s)$ is a solution of the dif\/ferential relation
$\phi(\tilde{s}) = \epsilon \phi(s)$ for constant~$\epsilon$,
for the functions $\phi$ listed the last column of Table~\ref{t:gdh}.
An example involving the Cases~1(a) and~1(b) was presented earlier
in Section~\ref{section3.2} (below Table~\ref{t:gdh}). Explicitly, this dif\/ferential
relation reads as
\begin{gather*}{\rm d}x = \frac{{\rm d}\tilde{s}}{\tilde{s}^{1/2}(\tilde{s}-1)^{2/3}} =
 \epsilon \frac{{\rm d}s}{s^{1-\alpha_1}(s-1)^{1-\beta_1}} ,\end{gather*}
where $\tilde{s}(z)$ corresponds to the Schwarz function in Case~1(a)
and $s(z)$ represents the Schwarz function for each of the other
cases in Table~\ref{t:gdh}. It follows from our discussion
in Section~\ref{section3.1} that, geometrically, the above dif\/ferentials
def\/ine the ``f\/lat'' coordinate $x$ on a one-dimensional manifold
$M \subseteq \mathbb{CP}^1$ in which the af\/f\/ine connection
$\eta$ associated with the Chazy XII solution $y(z)$ is trivialized.
Both $s$ and $\tilde{s}=\theta(s)$ are projective invariant coordinates
on $M$. It is interesting to note that solving the above
dif\/ferential relation directly provides an alternative way to derive
the map $\theta(s)$, instead of using the
algebraic transformations of the hypergeometric functions. However,
we do not pursue this direct approach here.
We also remark that the dif\/ferential $\phi(z)dz$ is a primitive of the incomplete
beta integral (see, e.g.,~\cite{Bateman})
\begin{gather*}B(\alpha_1, \beta_1; s) = \int_0^st^{\alpha_1-1}(t-1)^{\beta_1-1}\,{\rm d}t
= \frac{s^{\alpha_1}}{\alpha_1}\,{}_2F_1(\alpha_1, 1-\beta_1;\alpha_1+1;s) ,\end{gather*}
which is also related to the hypergeometric functions. Thus, the rational
map $\tilde{s}= \theta(s)$ corresponds to algebraic transformation of the beta
functions. The derivation of these rational maps directly from
the solutions of \eqref{gch} have not been studied to the best
of our knowledge.

\section{Concluding remarks}\label{section5}
In this article, we have presented a method that converts the
Chazy XII dif\/ferential equation into an algebraic relation.
This method is based on a simple dif\/ferential geometric interpretation
of the Chazy XII equation given by Dubrovin in~\cite{Dubrovin}.
By the way of elucidating this algebraic relationship, we have derived all possible
parametrizations of the Chazy XII solution~$y(z)$ via the Schwarz triangle
functions $S(\alpha, \beta, \gamma;z)$ for $\alpha+\beta+\gamma<1$.
Our method can be extended in a~straightforward way to the
case when $\alpha+\beta+\gamma \geq 1$ but we do not pursue it here.
Furthermore, we show that these parametrizations can be described
in terms of classical hypergeometric functions. Using the hypergeometric
theory, the center and radius of the natural barrier for the Chazy~XII solution
are found explicitly. Finally, the algebraic transformations of
the hypergeometric functions are used to obtain rational maps between
the Schwarz functions corresponding to the Chazy~XII solution.

It is known that the Chazy III equation is equivalent to the Ramanujan
dif\/ferential relations~\eqref{dPQR} for the Eisenstein series~$P$,~$Q$,~$R$
for the modular group ${\rm SL}_2(\mathbb{Z})$. Ramanujan also derived a number of remarkable
identities among the functions $P$, $Q$, $R$ using their representation in terms
of hypergeometric functions. Likewise, we have introduced a Ramanujan-like
triple $(\hat{P}, \hat{Q}, \hat{R})$ which satisfy dif\/ferential relations
that are equivalent to the Chazy~XII equation. Additionally, these
functions also satisfy interesting identities that can be derived from their
hypergeometric representations. We have presented an example at the
end of Section~\ref{section4.1}, but a comprehensive exploration of this line of
investigation is for future study.

The rational maps between the Schwarz functions using the pull-backs
of hypergeometric equations were presented in Section~\ref{section4.2}. However, we
found that these maps can be also obtained by solving certain f\/irst
order dif\/ferential relations among the Schwarz functions. These relations
follow directly from the representation of the Chazy~XII solution~$y(z)$ in terms of the function~$\phi(z)$ in~\eqref{ypara}. We believe
that all rational maps of Schwarz functions satisfy such dif\/ferential relations
although we have not pursued this question further here since
it is beyond the scope of this article. We plan to study this
issue in a future work.

\appendix
\section{Proof of Lemma~\ref{lemma1}}\label{appendixA}
In this appendix we prove Lemma~\ref{lemma1} from Section~\ref{section3.2}.
\begin{proof}
Suppose $J=0$. Then \eqref{u1} and \eqref{u5} lead to 4 cases:
\begin{gather*} (1)\ A = B=0, \qquad (2)\ \alpha_1=\beta_1=0, \qquad (3) \ A=\beta_1=0,
\qquad (4)\ B=\alpha_1=0 .\end{gather*}
When $A=B=0$, the remaining equations yield the following conditions
on $\alpha_1$, $\beta_1$ and $C$
\begin{gather*}(1-2\alpha_1)(1-3\alpha_1)C=(1-2\beta_1)(1-3\beta_1)C\\
\hphantom{(1-2\alpha_1)(1-3\alpha_1)C}{} =
[(1-2\alpha_1)(2-3\beta_1)+ (1-2\beta_1)(2-3\alpha_1)]C=0 .\end{gather*}
If $C=0$, then $A=B=C=0$ implies $\alpha=\alpha_1$, $\beta=\beta_1$,
and $\gamma=\gamma_1$ from the def\/initions of~$A$,~$B$,~$C$ given
in Section~\ref{section3.2}.
Therefore, $\alpha+\beta+\gamma=\alpha_1+\beta_1+\gamma_1=1$, which violates
the condition $\alpha+\beta+\gamma<1$ in~\eqref{cond}. Hence, $C \neq 0$.
Then the f\/irst two conditions above imply
\begin{gather*} \alpha_1 \in \big\{\half, \tf13\big\}, \qquad
\beta_1 \in \big\{\half, \tf13\big\}. \end{gather*}
Except for the case
$\{\alpha_1, \beta_1\} = \{\half, \half\}$, the remaining values of
$\alpha_1$ and $\beta_1$ do not satisfy the third condition given above.
On the other hand, if $\{\alpha_1, \beta_1\} = \{\half, \half\}$, then
$\alpha=\alpha_1=\half$ and $\beta = \beta_1=\half$ because $A=B=0$.
Hence, $\alpha+\beta = 1$, which contradicts \eqref{cond}.

If $\alpha_1=\beta_1=0$, then $\gamma_1=1$. In this case, \eqref{u3} gives
$A+B+C=0$, which implies $\gamma = \gamma_1 =1$, again violating \eqref{cond}.

Cases (3) and (4) are interchangeable by switching $A$ with $B$, and
$\alpha_1$ with $\beta_1$. So it suf\/f\/ices consider only one case, say if
$A=\beta_1=0$. Then \eqref{u2} implies
\begin{gather}
(1-\alpha_1)B + \half C=0
\label{a1}
\end{gather}
Eliminating $B$ from \eqref{u3} by using \eqref{a1} leads to
the condition $(1-2\alpha_1)C=0$. If $\alpha_1=\half$, \eqref{a1}
implies that $B+C=0$ which is equivalent to $\gamma=\gamma_1=\half$
after taking into account $A=0$ and $\beta_1=0$. Hence, $\alpha=\gamma=\half$,
which contradicts $\alpha+\beta+\gamma<1$.

Finally, if $C=0$ in \eqref{a1}, then either $\alpha_1=1$, or $B=0$.
The case $\alpha_1=1$ is impossible because then $A=0$ would imply
$\alpha=\alpha_1=1$, which does not satisfy \eqref{cond}.
If $B=0$, then $A=B=C=0$. This is impossible as argued
in Case (1) above.
\end{proof}

\section{Radius of the orthogonal circle}\label{appendixB}
In this appendix we brief\/ly outline a derivation of
the expression in Section~\ref{section4.1} for the radius~$R$
of the orthogonal circle C, which forms the natural barrier in the
complex plane to the solutions of \eqref{gch}.
\begin{figure}[h]\centering
\includegraphics[scale=0.5]{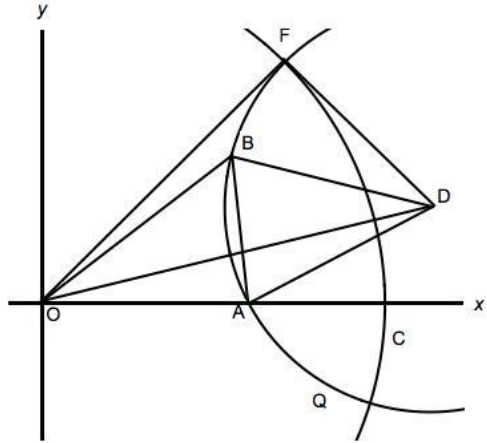}

\caption{The fundamental triangle OAB with one side AB being an arc
of the circle Q with center~D. The orthogonal circle C is centered
at the origin O and has the radius OF=$R$. DF and OF are perpendicular.}\label{fig:R}
\end{figure}
Recall from Section~\ref{section4.1} that \eqref{map} maps the upper-half $s$-plane
onto a circular triangle with two straight edges denoted by~OA and~OB as shown in Fig.~\ref{fig:R}.
The third side of the triangle is formed by the circular arc AB which
is part of the circle Q with center at D, and radius ${\rm AD}={\rm BD}:=r$.
The vertices O, A, B correspond to the image points $z(0)=0$,
$z(1)$ and $z(\infty)$ of the map \eqref{map}, respectively.
The angle $\pi\alpha=\angle {\rm BOA}$, while angles $\pi\beta$ and $\pi\gamma$ are
the angles made by OA and OB respectively, with the circular arc~AB.
The orthogonal circle C is centered at the origin O and its radius
OF:=$R$. Now consider the triangles OAB and DAB whose common side is
given by the chord AB. From elementary considerations, it follows that
$\angle {\rm ADB}=\pi-\pi(\alpha+\beta+\gamma)$, $\angle {\rm
DAB}=\pi(\tf{\alpha+\beta+\gamma}{2})$, and
$\angle {\rm OBA}=\tf{\pi}{2}-\pi(\tf{\alpha+\beta-\gamma}{2})$.
Applying the law of sines to triangles OAB and DAB, and denoting
${\rm OA}=x$, one f\/inds that
\begin{gather*}\mathrm{AB} = \frac{x \sin \pi\alpha}{\cos \pi(\tf{\alpha+\beta-\gamma}{2})},
\qquad \mathrm{BD}:= r = \frac{\mathrm{AB}\sin\pi(\tf{\alpha+\beta+\gamma}{2})}
{\sin \pi(\alpha+\beta+\gamma)} .\end{gather*}
Eliminating AB from above, yields the following expression for the radius
of the circle Q
\begin{gather}
r=\frac{x \sin \pi\alpha}{2\cos \pi(\tf{\alpha+\beta-\gamma}{2})
\cos \pi(\tf{\alpha+\beta+\gamma}{2})} .
\label{r}
\end{gather}
Since the circles Q and C are mutually orthogonal, their radii
OF and DF are perpendicular. Hence from the right triangle ODF
with hypotenuse ${\rm OD}=:d$ it follows that $R^2=d^2-r^2$. Next, considering the
triangle OAD with $\angle {\rm OAD}=\tf{\pi}{2}+\pi\beta$, one obtains
$d^2=x^2+r^2+2rx\sin \pi\beta$. Substituting this into the expression
for $R^2$ and using \eqref{r}, one obtains
\begin{gather}
R^2 = x^2 \frac{\sin \tf{\pi}{2}(1-\alpha+\beta-\gamma)
\sin \tf{\pi}{2}(1-\alpha+\beta+\gamma)}{\sin \tf{\pi}{2}(1-\alpha-\beta+\gamma)
\sin \tf{\pi}{2}(1-\alpha-\beta-\gamma)} ,
\label{R}
\end{gather}
after using some trigonometric identities. From \eqref{map} the line segment
OA$=x$ along the real axis is given by
\begin{gather*}x=z(1)=\frac{_2F_1(a-c+1, b-c+1; 2-c; 1)}{_2F_1(a, b; c; 1)}
=\frac{\Gamma(2-c)\Gamma(c-a)\Gamma(c-b)}{\Gamma(c)\Gamma(1-a)\Gamma(1-b)} ,\end{gather*}
after using $_2F_1(a, b; c; 1)=\Gamma(c)\Gamma(c-a-b)[\Gamma(c-a)\Gamma(c-b)]^{-1}$~\cite{Bateman}.
Substituting the above expression for $x$ into \eqref{R},
using the identity $\Gamma(t)\Gamma(1-t)=\pi\,\mathrm{csc}(\pi t)$, and replacing
$(a, b, c)$ by their values in terms of $(\alpha, \beta, \gamma)$, one f\/inally
obtains the expression for $R^2$ in Section~\ref{section4.1}.

\subsection*{Acknowledgments}
The work of SC was partly supported by NSF grant No. DMS-1410862. The work of OB was supported in part by a CRCW grant from University of Colorado, Colorado Springs. The authors thank Professor Mark Ablowitz for useful discussions, as well as the anonymous referees for their valuable remarks which substantially improved the article.

\pdfbookmark[1]{References}{ref}
\LastPageEnding

\end{document}